\theoremstyle{plain}
\newtheorem{theorem}{Theorem}[section]
\theoremstyle{plain}
\newtheorem{prop}{Proposition}[section]
\theoremstyle{definition}
\theoremstyle{remark}
\newtheorem{remark}[theorem]{Remark}
\theoremstyle{plain}
\theoremstyle{plain}
\newtheorem{lemma}[theorem]{Lemma}
\newcommand{\del}{\partial}
\def\R{{\mathbb{R}}}
\def\FT{{\mathcal{F}}}
\def\sN{{\mathcal{N}}}
\def\sS{{\mathcal{S}}}
\def\supp{{\text{\upshape supp}\;}}
\newcommand{\embed}{\hookrightarrow}
\newcommand{\ts}{\textstyle}
\newcommand{\<}{\langle}
\renewcommand{\>}{\rangle}
\renewcommand{\^}[1]{\widehat{#1}}
\renewcommand{\~}[1]{\widetilde{#1}}
\begin{document}

\title[Almost critical LWP for 2D NLW with null forms]{Almost critical well-posedness for  nonlinear\\ wave equation with $Q_{\mu\nu}$ null forms in 2D}

\thanks{2010 {\it{Mathematics Subject Classification}}. 35L70.}

\author[Grigoryan]{Viktor Grigoryan$^1$}
\address{$^1$  Department of Mathematics \\ Occidental College \\  1600 Campus Road, Los Angeles, California 90041}
\email{vgrigoryan@oxy.edu}
\thanks{}

\author[Nahmod]{Andrea R. Nahmod$^2$}
\address{$^2$  
Department of Mathematics \\ University of Massachusetts\\  710 N. Pleasant Street, Amherst MA 01003}
\email{nahmod@math.umass.edu}
\thanks{$^2$ The second author is funded in part by NSF DMS 1201443. She also thanks the Radcliffe Institute for Advanced Study at Harvard U. for its hospitality during the final stages of this work.}


\begin{abstract}
In this paper we prove an optimal local well-posedness result for the 1+2 dimensional system of nonlinear wave equations (NLW) with quadratic null-form derivative nonlinearities $Q_{\mu\nu}$. The Cauchy problem for these equations is known to be ill-possed for data in the Sobolev space $H^s$ with $s<5/4$ for all the basic null-forms, except $Q_0$. However, the scaling analysis predicts local well-posedness all the way to the critical regularity of $s_c=1$. Following Gr\"{u}nrock's result for the quadratic derivative NLW, we consider initial data in the Fourier-Lebesgue spaces $\^{H}_s^r$, which coincide with the Sobolev spaces of the same regularity for $r=2$, but scale like lower regularity Sobolev spaces for $1<r<2$. Here we obtain local well-posedness for the range $s>1+\frac{1}{r}$, $1<r\leq 2$, which at one extreme coincides with $H^{\frac{3}{2}+}$ Sobolev space result, while at the other extreme establishes local well-posedness for the model null-form problem for the almost critical Fourier-Lebesgue space $\^{H}_{2+}^{1+}$. Using appropriate multiplicative properties of the solution spaces and relying on bilinear estimates for the $Q_{\mu\nu}$ forms, we prove almost critical local well-posedness for the Ward wave map problem as well.
\end{abstract}

\maketitle




\section{Introduction}
Consider systems of nonlinear wave equations (NLW),
\begin{equation}\label{Q}
 \Box u^I=Q(u^J, u^K),
\end{equation}
where $(u^I):R^{1+2}\to \R^m$, and $Q$ is a bilinear form inhibiting a null structure. That is, $Q$ can be written as a linear combination of the three basic null forms of Klainerman \cite{klainerman:null1} (see also \cite{klainerman:null2, christ:null}).
\begin{equation}\label{null_forms}\begin{split}
 Q_0(f, g) & =\del_t f\del_t g-\nabla f\cdot \nabla g,\\
 Q_{ij}(f, g) & =\del_i f\del_j g-\del_j f \del_i g,\\
 Q_{0j}(f, g) & =\del_t f\del_j g-\del_j f \del_t g.
\end{split}
\end{equation}
Here $\del_j$ stands for spatial derivatives, and $\nabla$ is the spatial gradient.

We are interested in the local well-posedness question for the system \eqref{Q} for initial data in Fourier-Lebesgue spaces $\^{H}_s^r$. More precisely, we consider the Cauchy problem for \eqref{Q} with initial conditions
\begin{equation}\label{Q_IC}
(u^I, \del_t u^I)|_{t=0}=(f^I, g^I)\in \^{H}_s^r\times \^{H}_{s-1}^r,
\end{equation}
and wish to establish local well-posedness for a range of the exponents $(r, s)$, which achieves almost critical well-posedness steaming from scaling considerations. The Fourier-Lebesgue spaces $\^{H}_s^r$ have been previously successfully used to achieve improved regularity results for a variety of equations (see e.g. \cite{hormander}, \cite{vargas-vega}, \cite{grunrock:mkdv}, \cite{grunrock:nls}, \cite{christ}, \cite{grunrock-herr}, \cite{grunrock:wave}).

Although systems \eqref{Q} were studied as standalone problems before, nonlinear terms with null structure also naturally arise in physical and geometric problems. One such example, which we will study in this paper in detail, is the Ward wave map system. It was introduced by Ward in \cite{ward:soliton} as a two dimensional completely integrable system, with its linear part invariant under Lorentz transformations. The Ward system can be realized as a dimensional reduction of the anti-self-dual Yang Mills equation (ASDYM) with split signature in $\R^{2+2}$, and can also be obtained from the space-time Monopole equation via gauge fixing (see e.g. \cite{dai-terng-uhlenbeck} for more details). The Ward wave map equation has the form
\begin{equation}\label{ward}
 (J^{-1}J_t)_t-(J^{-1}J_x)_x-(J^{-1}J_y)_y-[J^{-1}J_t, J^{-1}J_y]=0,
\end{equation}
where
\[
 J:\R^{1+2}\to U(n)
\]
is a $U(n)$ (or $SU(n)$) valued function, and hence $J^{-1}=J^*$, while $(x, y)\in\R^2$, and $[\cdot, \cdot]$ is the Lie bracket on $U(n)$. Using the product rule, \eqref{ward} can be written as
\[
 J^{-1}J_{tt}-J^{-1}\Delta J+J^{-1}_tJ_t-(\nabla J^{-1})\nabla J-J^{-1}J_tJ^{-1}J_y+J^{-1}J_yJ^{-1}J_t=0.
\]
Multiplying the last equation by $J$ on the left and using $(\del J)J^{-1}=-J\del J^{-1}$, the Ward wave map equation will become
\begin{equation}\label{ward_null}
 \Box J+JQ_0(J^{-1},J)+JQ_{02}(J^{-1}, J)=0.
\end{equation}
Notice that the nonlinearity in the last equation  is cubic in the unknown $J$, and has a null structure in the terms appearing with derivatives. 

System \eqref{Q} is a particular example of the more general quadratic derivative NLW,
\begin{equation}\label{dudu}
 \Box u=\del u \del u,
\end{equation}
where $\del u$ is the space-time gradient of $u$. Equation \eqref{dudu} is invariant under the scaling
\begin{equation}\label{scaling}
 (t, x)\mapsto (\lambda t, \lambda x).
\end{equation}
That is, if $u$ is a solution to \eqref{dudu} in $\R^{1+n}$, then so is $u_\lambda (t, x)=u(\lambda t, \lambda x)$. Under this scaling, the homogeneous Sobolev norm of the initial data scales as
\[
 \|u_\lambda(0, \cdot)\|_{\dot{H}^s(\R^n)}=\lambda^{s-\frac{n}{2}}\|u(0, \cdot)\|_{\dot{H}^s(\R^n)},
\]
and $s_c=\frac{n}{2}$ is called the critical exponent, as the $\dot{H}^{s_c}$ norm of the initial data is preserved under the scaling. Under general scaling considerations, one expects a local well-posedness for data in the Sobolev space $H^s$ for $s>s_c$ (subcritical regime), global existence for small data in $\dot{H}^{s_c}$ (critical regime), and some form of ill-posedness for data in $H^s$ for $s<s_c$ (supercritical regime). 

As the critical exponent in $\R^{1+n}$ is $s_c=\frac{n}{2}$, it is expected that the local well-posedness must hold for data in the Sobolev space $H^s$, $s>\frac{n}{2}$. This can be proved in dimensions $n\geq 4$ with Strichartz estimates approach, however, it is known to be false in dimensions $n=2, 3$.

In dimension $n=3$, the almost critical local well-posedness for null-form quadratic derivative NLW was proved by Klainerman and Machedon \cite{kl-mac:null}, while for the general quadratic derivative NLW \eqref{dudu}, the local well-posedness for $s>2$ was proved by Ponce and Sideris \cite{ponce-sideris}, which is sharp in light of the counterexamples of Linblad \cite{lindblad:1}, \cite{lindblad:2}, \cite{lindblad:3}.

In dimension $n=2$ the almost critical LWP  for the $Q_0$ null-form NLW was showed by Klainerman and Selberg \cite{kl-sel} in the context of wave maps, but it is known to be false for the other null forms, for which the best result is for data in $H^s$ with $s>\frac{5}{4}$ by Zhou \cite{zhou}, who also showed that it is sharp. We also observe that the sharp bilinear $X^{s, b}$ estimates of Foschi-Klainerman \cite{foschi-klainerman:bilinear} for the solutions of the free wave equation associated with the Cauchy problem with data in $H^s$ are also $\frac{1}{4}$ derivative above the scaling regularity. We will see that the $\^{H}_s^r$ space approach with $1<r<2$ circumvents the counterexamples of Zhou and Foschi-Klainerman. 

The best result for the general quadratic derivative NLW is for $s>\frac{7}{4}$, which can be shown by the Strichartz estimates approach.

The null structure in the Ward equation \eqref{ward_null} was exploited by Czubak in \cite{magda:thesis} to prove local well-posedness for the Ward wave map problem for data in $H^s(\R^2)$ for $s>\frac{5}{4}$. In light of Zhou's results, this result is also a $\frac{1}{4}$ derivative above the scaling prediction, since the Ward equation scales as \eqref{dudu}.

Recently, Gr\"{u}nrock showed in \cite{grunrock:wave} that the gap to the almost criticality for the general quadratic NLW \eqref{dudu} can be closed in dimension $n=3$ by considering initial data in the Fourier-Lebesgue spaces $\^{H}_s^r$. These space are defined by their norms as
\[
 \|f\|_{\^{H}_s^r}=\|\<\xi\>^s\^{f}\|_{L_\xi^{r'}}, \qquad \frac{1}{r}+\frac{1}{r'}=1,
\]
where $\^{f}$ stands for the Fourier transform of $f$, and\footnote{Alternatively, one can use the Japanese bracket $\<\xi\>=\sqrt{1+|\xi|^2}\simeq \sqrt{1+|\xi|^2}$.} $\<\xi\>=1+|\xi|$. The norm of the corresponding homogeneous space is $\|f\|_{\dot{\^{H}}_s^r}=\||\xi|^s\^{f}\|_{L_\xi^{r'}}$. Under the scaling \eqref{scaling}, the norm of the initial data in the homogeneous Fourier-Lebesgue spaces scales as
\[
  \|u_\lambda(0, \cdot)\|_{\dot{\^{H}}_s^r(\R^n)}=\lambda^{s-\frac{n}{r}}\|u(0, \cdot)\|_{\dot{\^{H}}_s^r(\R^n)},
\]
so the critical exponent for these spaces is $s_c^r=\frac{n}{r}$. Comparing how the homogeneous Sobolev and Fourier-Lebesgue norms scale, we observe the following correspondence in terms of scaling,
\[
 \dot{\^{H}}_s^r\sim \dot{H}^\sigma, \qquad \text{if} \qquad \sigma=s+n\left (\frac{1}{2}-\frac{1}{r}\right ).
\]
Gr\"{u}nrock established local well-posedness for data in the space $\^{H}_s^r$ for $s>\frac{2}{r}+1$, $1<r\leq 2$. Thus, his range of exponents almost reaches criticality at the endpoint $r=1$, since for this $r$, $s>\frac{2}{r}+1=\frac{3}{r}$, which gives the critical exponent $(s, r)=(3, 1)$ in dimension $n=3$. His approach relies on free wave interaction estimates of Foschi-Klainerman \cite{foschi-klainerman:bilinear} that come with a factor of $||\tau|-|\xi||^{\frac{n-3}{2}}$, which becomes unbounded near  the null cone $|\tau|=|\xi|$ in two dimensions. Thus, Gr\"{u}nrock's result cannot be directly generalized to a LWP result for the general quadratic NLW \eqref{dudu} in dimension $n=2$. However, if the nonlinearity has enough cancellation along the null cone to offset this factor, then the arguments can be salvaged, leading to a LWP for these special nonlinearities for a range of exponents $(s, r)$ that reach almost criticality. We follow this approach to prove the LWP for the Cauchy problem \eqref{Q}-\eqref{Q_IC} for all the null forms \eqref{null_forms}.

The main results of this paper are the following two theorems.

\begin{theorem}[LWP for null-form NLW]\label{main_Q}
 Let $1<r\leq 2$, $s>\frac{1}{r}+1$, then the Cauchy problem \eqref{Q}-\eqref{Q_IC} is locally well-posed for data in the space $\^{H}_s^r\times \^{H}_{s-1}^r$. 
\end{theorem}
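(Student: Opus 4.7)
The strategy is a contraction mapping argument for the Duhamel formulation of \eqref{Q}-\eqref{Q_IC}, carried out in a wave-Sobolev space adapted to the Fourier-Lebesgue setting. Specifically, I would work in the Bourgain-type spaces $X^{s,b}_r$ defined by
$$
\|u\|_{X^{s,b}_r} = \|\<\xi\>^s \<|\tau|-|\xi|\>^b \~{u}(\tau, \xi)\|_{L^{r'}_{\tau,\xi}}, \qquad \tfrac{1}{r}+\tfrac{1}{r'}=1,
$$
together with their time-localized restrictions $X^{s,b}_r(T)$. A Fourier-Lebesgue version of the standard $X^{s,b}$ embedding yields $X^{s,b}_r \embed C_t \^{H}_s^r$ whenever $b > 1/r$, so a fixed point in $X^{s,b}_r(T)$ with $b$ slightly above $1/r$ automatically produces a solution of the correct regularity.

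The argument then splits into three steps. First, the linear estimates: the free evolution $u_{\mathrm{free}}$ with data $(f,g)$ satisfies $\|u_{\mathrm{free}}\|_{X^{s,b}_r(T)} \lesssim \|f\|_{\^{H}_s^r} + \|g\|_{\^{H}_{s-1}^r}$, while the Duhamel operator admits a bound $X^{s-1,b-1+\theta}_r \to X^{s,b}_r(T)$ with a $T^{\theta}$ gain for some small $\theta>0$; both reduce to routine calculations once the spaces are fixed. Second, and this is the heart of the matter, one needs the bilinear null-form estimate
$$
\|Q_{\mu\nu}(u,v)\|_{X^{s-1, b-1+\theta}_r} \lesssim \|u\|_{X^{s,b}_r}\,\|v\|_{X^{s,b}_r}
$$
for every $(\mu,\nu)$ in \eqref{null_forms} and every $(s,r)$ in the claimed range. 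Third, these two ingredients feed into a standard Picard iteration on a small ball in $X^{s,b}_r(T)$, producing existence, uniqueness, and continuous dependence on data on a time interval depending only on the size of the initial data.

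The main obstacle is the bilinear null-form estimate itself. The crucial algebraic input is the Klainerman-Machedon type bound on the convolution symbol of $Q_{ij}$ and $Q_{0j}$, which, after dyadic decomposition in frequency and in the modulations $\Lambda_j := \<|\tau_j|-|\xi_j|\>$, takes the schematic form
$$
|q_{\mu\nu}(\tau_1,\xi_1;\tau_2,\xi_2)| \lesssim (|\xi_1||\xi_2|)^{1/2}(\max|\xi_j|)^{1/2}\bigl(\Lambda_1^{1/2} + \Lambda_2^{1/2} + \Lambda^{1/2}\bigr),
$$
expressing the vanishing of the symbol on the light cone. The half-derivative gain supplied by one of the three modulations is exactly what is needed to neutralize the dangerous factor $\<|\tau|-|\xi|\>^{-1/2}$ appearing in 2D free-wave bilinear interaction bounds of Foschi-Klainerman type, which would otherwise obstruct the argument in the scaling-critical regime, as already noted in the excerpt. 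The remainder of the proof is a convolution bound in $L^{r'}$, which I would handle case-by-case according to which of the three modulations dominates, closing everything with a dyadic summation. The bottleneck is the parallel high-high-to-low interaction, where the null gain is smallest and the summability in $r'$ is most delicate; the threshold $s > 1+1/r$ emerges precisely from this case. At $r=2$ this reproduces $s > 3/2$, matching the $H^{3/2+}$ bound, while as $r \to 1^+$ it approaches the scaling-critical $s_c = 1$, confirming that the range claimed in Theorem \ref{main_Q} is the natural reach of the method.
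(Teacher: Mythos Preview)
Your framework matches the paper's: contraction in the Fourier--Lebesgue wave--Sobolev spaces $X^{s,b}_r$, with the analytic substance concentrated in the bilinear null-form estimate. Where you diverge is in the proof of that estimate. You outline a modulation-decomposition argument: invoke the hyperbolic Leibniz bound on the symbol, split into cases according to which of $\Lambda$, $\Lambda_1$, $\Lambda_2$ dominates, and close with direct $L^{r'}$ convolution estimates and dyadic summation. The paper instead discards the output modulation weight entirely (taking $b'=0$ on the left, permissible since $b+\epsilon-1<0$), invokes a transfer principle to reduce to bilinear \emph{free-wave} estimates $\|\Lambda^\sigma q(u_+,v_{[\pm]})\|_{\^{L}^r_{t,x}}\lesssim\|u_0\|_{\^{H}^r_\sigma}\|v_0\|_{\^{H}^r_\sigma}$, and proves those by the Foschi--Klainerman method: parametrize the convolution over the ellipse $|\eta|+|\xi-\eta|=\tau$ or the hyperbola $|\eta|-|\xi-\eta|=\tau$, apply the explicit null-symbol bounds, and evaluate the resulting one-dimensional integrals via Foschi--Klainerman's Propositions~4.3--4.5. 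No dyadic or modulation case analysis enters.

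The paper's route is shorter and cleaner in the claimed range because the transfer principle disposes of the $b$-weights in one stroke, leaving a purely spatial free-wave computation. Your route is in the spirit of the D'Ancona--Foschi--Selberg inhomogeneous estimates that the paper explicitly flags as a possible avenue to push \emph{below} $s=1+1/r$; but carrying it out in $L^{r'}$ with $r'>2$ is not automatic (no Plancherel, no $L^2$ duality for the convolution), and your ``convolution bound in $L^{r'}$'' step would need genuine work, likely via H\"older and Young as in the paper's Section~4. One correction to your heuristics: in the paper's proof the bottleneck is the elliptic $(++)$ interaction, which forces $\sigma>1/r$; the hyperbolic $(+-)$ case only requires $\sigma>1/r-1/2$.
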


We also consider the Cauchy problem for the Ward equation \eqref{ward_null} with data in the Fourier Lebesgue spaces and will prove the following result.
\begin{theorem}[LWP for Ward]\label{ward_LWP}
 Let $1<r\leq 2$, $s>\frac{1}{r}+1$, then the Cauchy problem for the Ward equation \eqref{ward_null} is locally well-posed for data in the space
\[
(J, \del_tJ)\big{|}_{t=0}\in \^{H}_s^r\times \^{H}_{s-1}^r.
\] 
\end{theorem}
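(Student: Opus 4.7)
The plan is to follow the same overall strategy as the proof of Theorem~\ref{main_Q}, working by contraction in a time-localized Bourgain--Gr\"unrock-type Fourier-Lebesgue space $X^{s,b}_{r,T}$, but adapting the multilinear estimates to the cubic matrix-valued nonlinearity of \eqref{ward_null}. First I would rewrite \eqref{ward_null} via Duhamel,
$$J(t) = J_{\mathrm{lin}}(t) - \int_0^t \frac{\sin((t-s)|D|)}{|D|}\bigl[JQ_0(J^{-1},J) + JQ_{02}(J^{-1},J)\bigr](s)\,ds,$$
and look for a fixed point in a small ball around the free evolution $J_{\mathrm{lin}}$ inside $X^{s,b}_{r,T}$ for appropriate $b>\tfrac{1}{r}$ and $T$ small.

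The heart of the argument is the trilinear estimate
$$\bigl\|J\,Q_{\mu\nu}(J^{-1},J)\bigr\|_{X^{s-1,b-1+\epsilon}_{r,T}} \lesssim \|J\|_{X^{s,b}_{r,T}}\,\|J^{-1}\|_{X^{s,b}_{r,T}}\,\|J\|_{X^{s,b}_{r,T}},$$
which I would obtain by chaining together two ingredients already central to Theorem~\ref{main_Q}: the bilinear null-form estimate
$$\|Q_{\mu\nu}(u,v)\|_{X^{s-1,b-1+\epsilon}_{r,T}} \lesssim \|u\|_{X^{s,b}_{r,T}}\|v\|_{X^{s,b}_{r,T}}$$
applied to the inner factor, and the multiplicative (algebra) property of $X^{s,b}_{r,T}$ used to absorb the outer factor of $J$. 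The hypothesis $s>1+\tfrac{1}{r}$ lies strictly above the two-dimensional Fourier-Lebesgue algebra threshold $\tfrac{2}{r}$ for $r>1$, so the product estimate is available in the full range of exponents stated in the theorem.

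To control the non-polynomial factor $J^{-1}$ in the same norm I would exploit that on the relevant time interval $J$ remains close to its unitary initial datum $J(0)\in U(n)$ and expand via the Neumann series
$$J^{-1} = J(0)^{-1}\sum_{k\geq 0}\bigl(-J(0)^{-1}(J-J(0))\bigr)^{k},$$
noting that $J(0)^{-1}=J(0)^{\ast}$ has the same $\widehat{H}^{r}_{s}$ norm as $J(0)$. Each term is estimated by iterating the algebra bound, and the series converges in $X^{s,b}_{r,T}$ provided $T$ is chosen small enough that the Duhamel gain renders $\|J(0)^{-1}(J-J(0))\|_{X^{s,b}_{r,T}}<1$; this yields $\|J^{-1}\|_{X^{s,b}_{r,T}}\lesssim \|J(0)\|_{\widehat{H}^{r}_{s}} + \|J-J(0)\|_{X^{s,b}_{r,T}}$. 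Feeding this into the trilinear estimate and combining with the standard linear/inhomogeneous $X^{s,b}_{r,T}$ bounds produces a contraction on a small ball; persistence of $JJ^{\ast}=I$ then follows a posteriori from the flow preserving unitarity.

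The main obstacle I anticipate is Step~three: showing that the Neumann series for $J^{-1}$ converges in $X^{s,b}_{r,T}$ rather than merely pointwise, and verifying that the smallness required for the geometric series is compatible with the short-time contraction rather than forcing an extra smallness assumption on the data. Once that technicality is settled, the reduction to the bilinear null-form estimate already proved for Theorem~\ref{main_Q} handles the rest, and the threshold $s>1+\tfrac1r$ for the Ward problem inherits directly from the one for the quadratic null-form NLW.
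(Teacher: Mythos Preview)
Your overall plan---Duhamel plus contraction, reducing to a trilinear estimate that factors as the bilinear null-form estimate from Theorem~\ref{main_Q} composed with a product estimate that absorbs the extra factor of $J$---is exactly the paper's route. The gap is in that product step. What you actually need in order to pass from
\[
\|Q_{\mu\nu}(u,v)\|_{X_{s-1,b-1+\epsilon}^r}\lesssim \|u\|_{X_{s,b}^r}\|v\|_{X_{s,b}^r}
\]
to the trilinear bound is not the algebra property $X_{s,b}^r\cdot X_{s,b}^r\hookrightarrow X_{s,b}^r$, but the \emph{asymmetric} embedding
\[
X_{s,b}^r\cdot X_{s-1,0}^r\hookrightarrow X_{s-1,0}^r,
\]
since $Q_{\mu\nu}(u,v)$ lives in $X_{s-1,0}^r$ (one derivative below the solution space, and with zero second index), not in $X_{s,b}^r$. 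Your appeal to the $\^{H}_s^r$ algebra threshold $s>\tfrac{2}{r}$ does not by itself yield this: one must distribute the spatial weight $\langle\xi\rangle^{s-1}$ via Leibniz and then handle both pieces $X_{s,b}^r\cdot \^{L}^r\hookrightarrow \^{L}^r$ and $X_{1,b}^r\cdot X_{s-1,0}^r\hookrightarrow \^{L}^r$, the second of which requires a genuine mixed-norm argument (Young in $\tau$, H\"older in $\xi$ with carefully chosen auxiliary exponents) and uses $b>\tfrac{1}{r}$ to trade the hyperbolic weight for $L^1_\tau$ integrability. This is the actual content of the paper's Section~\ref{sec4}; it is not a consequence of the spatial algebra property alone.

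On the $J^{-1}$ issue: your Neumann-series argument is workable but unnecessarily delicate. Since $J$ takes values in $U(n)$ one has $J^{-1}=J^\ast$, and $\|J^\ast\|_{X_{s,b}^r}=\|J\|_{X_{s,b}^r}$ because $\widetilde{J^\ast}(\tau,\xi)=\overline{\widetilde{J}(-\tau,-\xi)}^{\,T}$; hence the nonlinearity is effectively cubic in $(J,J^\ast)$ with matching norms, and the trilinear estimate applies directly. The paper suppresses this point entirely and just proves the trilinear bound for independent arguments $w,u,v$.
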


\begin{remark}
 Notice that at one extreme, $(s, r)=(\frac{3}{2}, 2)$, this result coincides with the local well-posedness for data in $H^{\frac{3}{2}+}$, which is above the best know result of $H^{\frac{5}{4}+}$, while at the other extreme, $(s, r)=(2, 1)$, we obtain the almost critical local well-posedness in the space $\^{H}_{2+}^{1+}$.

 The region for the $(s, \frac{1}{r})$ exponents, for which the local well-posedness holds is shaded in Figure 1. Notice that the bottom and right edges of the region are not included. The dotted line segment in Figure 1 connects the sharp Sobolev results $H^{\frac{5}{4}+}$ with the critical $\^{H}_2^1$. For the region above this dotted line, one can prove the needed bilinear estimates for solutions of the free wave equation, however, below the shaded region, the estimates require placing the nonlinearity in the $X_{\sigma, b'}^r$ spaces for $b'<0$. These estimates cannot be transfered to estimates for general $X_{s, b}^r$ functions with the transfer principle, Proposition \ref{transfer_prop}. It would be interesting to see whether inhomogeneous estimates of D'Ancona, Foschi and Selberg \cite{DFS_2d} can be generalized to this case, yielding improved local well-posedness for the exponents $(s, r)$, however, we do not pursue this matter here, since we are able to almost reach critical regularity with our approach.
\end{remark}

\begin{figure}[h]
\begin{tikzpicture}[domain=0.5:2, x=5cm, y=2.5cm]
\draw[->] (-0.1,0) -- (1.6,0) node[below] {$\frac{1}{r}$};
\draw[->] (0,-0.1) -- (0,2.8) node[left] {$s$};
\shade[top color=gray,bottom color=gray!50] 
      (0.5,1.5) -- (0.5, 2.7) -- (1, 2.8) -- (1, 2) -- cycle;
\draw[dashed, domain=0.5:1] plot (\x,\x+1);
\draw[color=blue, domain=0.5:1] plot (\x,2*\x);

\draw (0.75, 1.75) node[above=5pt] {$s=\frac{1}{r}+1$};

\draw (0.75, 1.5) node[right] {$s=\frac{2}{r}$};

\draw plot[only marks,mark=ball, mark options={color=black}] coordinates{(0.5,1.5)};
\draw[color=black] plot[only marks,mark=ball] coordinates{(0.5,1.25) (0.5,1) (1,2)};

\draw[color=gray, dashed] (0.5,1.5) -- (0.5,0) node[below] {$\frac{1}{2}$};
\draw[color=gray, dashed] (1,2.8) -- (1,0)  node[below] {$1$};
\draw[color=gray, dashed] (0.5,1) -- (0,1)  node[left] {$1$};
\draw[color=gray, dashed] (0.5,1.5) -- (0,1.5)  node[left] {$\frac{3}{2}$};
\draw[color=gray, dashed] (0,1.25)  node[left] {$\frac{5}{4}$};

\draw[color=black, thick, dotted] (1, 2) -- (0.5, 1.25);

\end{tikzpicture}
\caption{The shaded region represents the range of indeces for which LWP for data in space $\^{H}_s^r\times \^{H}_{s-1}^r$ holds.}
\end{figure}
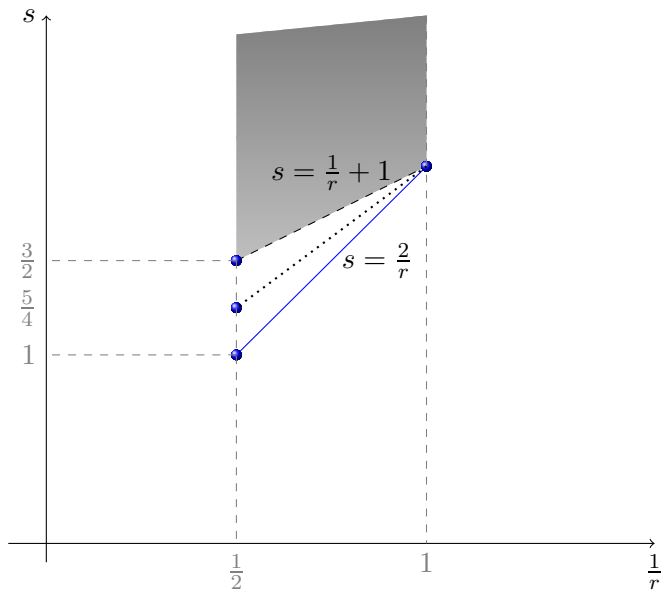

As we mentioned above, the approach via the transfer principle and free wave estimates is not well-suited for the general equation \eqref{dudu}. For this equation, an upcoming joint result of the first author with A. Tanguay \cite{grig-tang} improves the well-posedness range from the best known Sobolev result, which corresponds to a $\frac{1}{12}$ derivative improvement in the Sobolev exponent. Their approach uses generalizations of bilinear estimates in the inhomogeneous norms of D'Ancona, Foschi and Selberg \cite{DFS_2d} to the Fourier-$L^{r'}$ based spaces.

The rest of the paper is organized as follows. in Section \ref{sec2} the solution spaces are introduced, and Theorem \ref{main_Q} is reduced to a bilinear estimate for solutions of the free wave equation. This is done via the general LWP theorem, which is stated in Appendix \ref{appB}, and the transfer principle, which is proved in Appendix \ref{appA}. In Section \ref{sec3} we prove the main bilinear estimate. In section \ref{sec4} using appropriate multiplicative estimates of the solutions spaces, we show that the bilinear estimates of Section \ref{sec3} imply trilinear estimates for $wQ(u, v)$, leading to the almost critical LWP of the Ward wave map problem.


\section{The $X_{s, b}^r$ space, reduction to a bilinear estimate}\label{sec2}
The local in time solution is obtained via a contraction principle in a suitable solution space. For this we will use time restriction spaces based on the $X_{s, b}^r$ space, which is a Fourier-$L^{r'}$ analogue of the wave-Sobolev space $X^{s, b}$ and is defined by the norm
\[
 \|u\|_{X_{s, b}^r}=\|\<\xi\>\<|\tau|-|\xi|\>\~{u}\|_{L_{\tau, \xi}^{r'}},
\]
where $\~{u}$ denotes the time-space Fourier transform of $u$. The time restriction space is then
\[
 X_{s, b; T}^r=\left \{u=U|_{[-T, T]\times \R^n} : U\in X_{s, b}^r\right \},
\]
with its norm defined as
\[
 \|u\|_{X_{s, b; T}^r}=\inf \left \{\|U\|_{X_{s, b}^r} : U|_{[-T, T]\times \R^n}=u \right \}.
\]
For us, $b>\frac{1}{r}$, which guarantees the embeddings (see Proposition \ref{transfer_prop})
\[
 X_{s, b}^r\subset C(\R, \^{H}_s^r), \qquad \text{and} \qquad X_{s, b; T}^r\subset C([-T, T], \^{H}_s^r).
\]

As the wave operator is of second order in time, we also need to separately estimate the time derivative of the solution. Hence, we define our solution space, $Z_{s, b}^r$, by the norm
\[
 \|u\|_{Z_{s, b}^r}=\|u\|_{X_{s, b}^r}+\|\del_t u\|_{X_{s-1, b}^r}.
\]
The time restriction space $Z_{s, b; T}^r$ and its norm are defined as before.

In the sequel we also make use of the notation $\^{L}^r_{t, x}=X_{0, 0}^r$, and $\^{L}^r=\^{H}_0^r$, where the last norm can be taken either with respect to the time or space variables.

By the general well-posedness, Theorem \ref{gen_LWP}, Theorem \ref{main_Q} will follow from the following two estimates
\begin{equation}\label{null}
 \|Q(u, v)\|_{X_{s-1, b+\epsilon-1}^r} \lesssim \|u\|_{X_{s, b}^r}\|v\|_{X_{s, b}^r}, 
\end{equation}
and
\begin{align*}
 \|Q(u, v)-Q(U, V)\|_{X_{s-1, b+\epsilon-1}^r} & \lesssim \left (\|u\|_{X_{s, b}^r}+\|v\|_{X_{s, b}^r}+\|U\|_{X_{s, b}^r}+\|V\|_{X_{s, b}^r}\right )\\
 &\qquad\qquad\qquad \times\left (\|u-U\|_{X_{s, b}^r}+\|v-V\|_{X_{s, b}^r}\right ). \label{null_Lip}
\end{align*}
The second estimate will trivially follow from the first one, as $Q$ is bilinear and, hence,
\[
 Q(u, v)-Q(U, V)=Q(u-U, v)+Q(U, v-V).
\]
Thus, the proof of Theorem \ref{main_Q} reduces to proving estimate \eqref{null} for all the null forms \eqref{null_forms}.

Since $b+\epsilon-1<0$, estimate \eqref{null} will follow from
\begin{equation}\label{null_0}
 \|Q(u, v)\|_{X_{s-1, 0}^r} \lesssim \|u\|_{X_{s, b}^r}\|v\|_{X_{s, b}^r}.
\end{equation}
We denote the symbol of $Q$ by $m$, that is,
\[
 \~{Q(u, v)}(\tau, \xi)=\int\limits_{\alpha+\beta=\tau}\int\limits_{\eta+\zeta=\xi} m(\alpha, \eta; \beta, \zeta) \~{u}(\alpha, \eta)\~{v}(\beta, \zeta)\, d\alpha d\beta d\eta d\zeta.
\]
Denoting the corresponding bilinear operator with the space-normalized symbol,
\[
 \frac{m(\alpha, \eta; \beta, \zeta)}{|\eta||\zeta|},
\]
by $q$, we will have
\[
 |\FT_{t, x}Q(u, v)|\lesssim |\FT_{t, x}q(\del_x u, \del_x v)|,
\]
where $\FT_{t, x}$ is the space-time Fourier transform operator. Then estimate \eqref{null_0} will follow from
\begin{equation}\label{q_0}
 \|\Lambda^{\sigma}q(u, v)\|_{\^{L}^r_{t, x}} \lesssim \|u\|_{X_{\sigma, b}^r}\|v\|_{X_{\sigma, b}^r},
\end{equation}
where $\sigma=s-1$, and the operator $\Lambda^\sigma$ has the multiplier $\<\xi\>^\sigma$, that is,
\[
\^{\Lambda^\sigma \phi}(\xi)=\<\xi\>^\sigma\^{\phi}(\xi).
\]

By the transfer principle\footnote{Here $\Lambda^\sigma$ in front of the bilinear form will be taken care of via a simple Leibniz type estimate on the Fourier side, so its presence does not effect applicability of the transfer principle.}, Proposition \ref{transfer_prop}, estimate \eqref{q_0} will follow from the following bilinear estimate for free waves
\begin{equation}\label{bi_q_free}
  \|\Lambda^\sigma q(u_\pm, v_{[\pm ]})\|_{\^{L}^r_{t, x}}\lesssim \|u_0\|_{\^{H}_\sigma^r}\|v_0\|_{\^{H}_\sigma^r},
\end{equation}
where $[\pm]$ denotes an independent choice of signs, and
\begin{equation*}
 u_\pm(t)=e^{\pm itD}u_0, \qquad v_\pm(t)=e^{\pm itD}v_0.
\end{equation*}

The next section is dedicated to proving \eqref{bi_q_free} in which we follow the method of Gr\"{u}nrock \cite{grunrock:wave}, while relying on calculations of Foschi-Klainerman \cite{foschi-klainerman:bilinear}.


\section{Proof of the main bilinear estimate}\label{sec3}
We first observe that by symmetry we only need to consider the $(++)$ and $(+-)$ cases in \eqref{bi_q_free}. Defining $P_{\pm}(\eta)=|\xi-\eta|\pm |\eta|$ with $\nabla P_\pm(\eta)=\frac{\eta-\xi}{|\eta-\xi|}\pm\frac{\eta}{|\eta|}$, and using the properties of the $\delta$-distribution, we have (see \cite[Secions 3, 4]{foschi-klainerman:bilinear})
\begin{equation*}
 \FT u_+v_\pm (\xi, \tau)\simeq \int_{P_\pm(\eta)=\tau}\frac{dS_\eta}{|\nabla P_\pm (\eta)|}\^{u_0}(\eta)\^{v_0}(\xi-\eta).
\end{equation*}
The set $\{P_+(\eta)=\tau\}$ is an ellipsoid of rotation, while the set $\{P_-(\eta)=\tau\}$ is a hyperboloid of rotation, so we refer to the $(++)$ and $(+-)$ cases as elliptic and hyperbolic respectively.

We will prove estimate \eqref{bi_q_free} for the form $q_{12}$ corresponding to the form $Q_{12}$. The proofs for $q_0$ and $q_{0j}$ are similar, and will be outlined in remarks. Before proceeding, we observe the following bounds for the symbol of the bilinear operator $q_{12}$ (see \cite[Lemma 13.2]{foschi-klainerman:bilinear})
\begin{align}
 \frac{\eta\wedge\zeta}{|\eta||\zeta|} & =\frac{\eta\wedge(\xi-\eta)}{|\eta||\xi-\eta|}\lesssim \frac{|\xi|^{\frac{1}{2}}(|\eta|+|\xi-\eta|-\xi|)^{\frac{1}{2}}}{|\eta|^\frac{1}{2}|\xi-\eta|^{\frac{1}{2}}} \label{q_bound_el},\\
 \frac{\eta\wedge\zeta}{|\eta||\zeta|} & =\frac{\eta\wedge(\xi-\eta)}{|\eta||\xi-\eta|}\lesssim \frac{|\xi|^{\frac{1}{2}}(|\xi|-||\eta|-|\xi-\eta||)^{\frac{1}{2}}}{|\eta|^\frac{1}{2}|\xi-\eta|^{\frac{1}{2}}} \label{q_bound_hyp}.
\end{align}
The first estimate above will be useful in the elliptic case, while the second estimate will be useful in the hyperbolic case.


\subsection{The elliptic case}
We choose $0<s_{1, 2}<\frac{1}{r}$ with $s_1+s_2=\frac{1}{r}$, and use H\"{o}lder's inequality to get
\[
\begin{split}
 & |\FT q_{12}(u_+, v_+)| \lesssim \left (\int_{P_+(\eta)=\tau} \frac{dS_\eta}{|\nabla P_\pm (\eta)|} \left |\frac{\eta\wedge(\xi-\eta)}{|\eta||\xi-\eta|}\right|^r |\eta|^{-s_1 r}|\xi-\eta|^{-s_2 r}\right )^\frac{1}{r}\\
 & \qquad\qquad\qquad\qquad\qquad\qquad  \times \left (\int_{P_+(\eta)=\tau} \frac{dS_\eta}{|\nabla P_+ (\eta)|} \left |\^{\Lambda^{s_1} u_0}(\eta)\^{\Lambda^{s_2} v_0}(\xi-\eta)\right |^{r'}\right )^\frac{1}{r'}.
\end{split}
\]
Using the bound \eqref{q_bound_el}, we have for the first factor above
\begin{align*}
 & I:=\int_{P_+(\eta)=\tau} \frac{dS_\eta}{|\nabla P_\pm (\eta)|} \left |\frac{\eta\wedge(\xi-\eta)}{|\eta||\xi-\eta|}\right|^r |\xi-\eta|^{-s_1 r}|\eta|^{-s_2 r}\\
 & \qquad \lesssim \int_{P_+(\eta)=\tau} \frac{dS_\eta}{|\nabla P_\pm (\eta)|} |\xi|^{\frac{r}{2}}|\eta|^{-(s_1+\frac{1}{2})r}|\xi-\eta|^{-(s_2+\frac{1}{2})r}(|\eta|+|\xi-\eta|-|\xi|)^{\frac{r}{2}}\\
 & \qquad\qquad =|\tau-|\xi||^{\frac{r}{2}}|\xi|^\frac{r}{2}\int \frac{\delta(\tau-|\eta|-|\xi-\eta|)}{|\eta|^{(s_1+\frac{1}{2})r}|\xi-\eta|^{(s_2+\frac{1}{2})r}}\, d\eta.
\end{align*}
We will only need estimates for the cases $s_1=0, s_2=\frac{1}{r}$ and $s_2=0, s_1=\frac{1}{r}$, for both of which
\[
 \ts\max\{(s_1+\frac{1}{2})r, (s_2+\frac{1}{2})r\}=1+\frac{r}{2} > \frac{3}{2}.
\]
Now using \cite[Proposition 4.3]{foschi-klainerman:bilinear}, we get
\begin{equation*}
 I\lesssim |\tau-|\xi||^{\frac{r}{2}}|\xi|^\frac{r}{2} \tau^A (\tau-|\xi|)^B,
\end{equation*}
where 
\begin{align*}
 \ts A & \ts =\max \{(s_1+\frac{1}{2})r, (s_2+\frac{1}{2})r, \frac{3}{2}\}-(s_1+\frac{1}{2})r-(s_2+\frac{1}{2})r=-\frac{r}{2},\\
 \ts B & \ts =1-\max \{(s_1+\frac{1}{2})r, (s_2+\frac{1}{2})r, \frac{3}{2}\}=-\frac{r}{2}.
\end{align*}
Thus,
\begin{equation*}
 I\lesssim \frac{|\tau-|\xi||^{\frac{r}{2}}|\xi|^\frac{r}{2}}{|\tau-|\xi||^{\frac{r}{2}}\tau^{\frac{r}{2}}}\lesssim 1,
\end{equation*}
since $\tau=|\xi-\eta|+|\eta|\geq |\xi|$.

But then for $\sigma>\frac{1}{r}$,
\[
 \|q_{12}(u_+, v_+)\|_{\^{L}^r_{x, t}}\lesssim \|\Lambda^\sigma u_0\|_{\^{L}_x^r}\|v_0\|_{\^{L}_x^r}.
\]
We can trivially exchange $u_0$ and $v_0$ in the above estimate. On the other hand, the convolution constraint $\xi=\eta+(\xi-\eta)$ implies $\<\xi\>^\sigma\lesssim \<\eta\>^\sigma+\<\xi-\eta\>^\sigma$, which gives the following.
\begin{lemma}\label{el_lemma}
 Let $1< r\leq 2$ and $\sigma>\frac{1}{r}$, then
\begin{equation*}
 \|q_{12}(u_+, v_+)\|_{X_{\sigma, 0}^r}\lesssim \|u_0\|_{\^{H}_\sigma^r}\|v_0\|_{\^{H}_\sigma^r}.
\end{equation*}
\end{lemma}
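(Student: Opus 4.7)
The preceding computation has essentially reduced the task to bookkeeping. The surface-integral bound $I \lesssim 1$ just established, combined with the H\"older splitting at $(s_1, s_2) = (\frac{1}{r}, 0)$, yields the flat bilinear estimate
\[
\|q_{12}(u_+, v_+)\|_{\^{L}^r_{t,x}} \lesssim \|D^{1/r} u_0\|_{\^{L}_x^r}\|v_0\|_{\^{L}_x^r} \lesssim \|\Lambda^\sigma u_0\|_{\^{L}_x^r}\|v_0\|_{\^{L}_x^r},
\]
where the second inequality uses $|\xi|^{1/r} \le \langle\xi\rangle^{1/r} \le \langle\xi\rangle^\sigma$, valid since $\sigma > \frac{1}{r}$. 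Because $Q_{12}$ is antisymmetric, swapping the roles of $u_0$ and $v_0$ (equivalently, running the same H\"older step with $(s_1, s_2) = (0, \frac{1}{r})$) yields the mirror estimate
\[
\|q_{12}(u_+, v_+)\|_{\^{L}^r_{t,x}} \lesssim \|u_0\|_{\^{L}_x^r}\|\Lambda^\sigma v_0\|_{\^{L}_x^r}.
\]

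To upgrade these two $\^{L}^r_{t,x} = X_{0,0}^r$ bounds to the claimed $X_{\sigma,0}^r$ estimate, the plan is to distribute the frequency weight $\langle\xi\rangle^\sigma$ across the bilinear convolution. The space-time Fourier support of $q_{12}(u_+, v_+)$ lies on $\{\xi = \eta + (\xi-\eta),\ \tau = |\eta| + |\xi-\eta|\}$, on which the elementary inequality
\[
\langle\xi\rangle^\sigma \lesssim \langle\eta\rangle^\sigma + \langle\xi-\eta\rangle^\sigma
\]
holds. Since $\Lambda^\sigma$ commutes with the free evolution $e^{itD}$, inserting this pointwise on the Fourier side gives
\[
\langle\xi\rangle^\sigma |\~{q_{12}(u_+,v_+)}(\tau,\xi)| \lesssim |\~{q_{12}(\Lambda^\sigma u_+, v_+)}(\tau,\xi)| + |\~{q_{12}(u_+, \Lambda^\sigma v_+)}(\tau,\xi)|,
\]
where $\Lambda^\sigma u_+$ and $\Lambda^\sigma v_+$ are again free waves, now with initial data $\Lambda^\sigma u_0$ and $\Lambda^\sigma v_0$ respectively.

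Taking the $L^{r'}_{\tau,\xi}$ norm of both sides and applying the flat estimate to the first term and the mirror estimate to the second yields
\[
\|q_{12}(u_+, v_+)\|_{X_{\sigma,0}^r} \lesssim \|\Lambda^\sigma u_0\|_{\^{L}_x^r}\|\Lambda^\sigma v_0\|_{\^{L}_x^r} = \|u_0\|_{\^{H}_\sigma^r}\|v_0\|_{\^{H}_\sigma^r},
\]
which is exactly the lemma. The real content has already been dispatched in the bound $I \lesssim 1$ via Foschi–Klainerman's surface-integral estimate (whose applicability relied on $\max\{(s_1+\frac12)r,(s_2+\frac12)r\} = 1 + \frac{r}{2} > \frac{3}{2}$); no genuine obstacle remains, as the $\langle\xi\rangle^\sigma$-distribution step is purely algebraic.
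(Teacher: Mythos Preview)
Your argument is correct and mirrors the paper's own proof essentially verbatim: obtain the $\^{L}^r_{t,x}$ bound from $I\lesssim 1$ with all $\frac{1}{r}$ derivatives on one factor, swap, then use $\langle\xi\rangle^\sigma\lesssim\langle\eta\rangle^\sigma+\langle\xi-\eta\rangle^\sigma$ to distribute the output weight. Two cosmetic slips to fix: the displayed pointwise Leibniz inequality is only valid after the standard reduction to nonnegative $\^{u_0},\^{v_0}$ (and $|m|$ in place of $m$), and in the final step you should apply the \emph{mirror} estimate to $q_{12}(\Lambda^\sigma u_+,v_+)$ and the \emph{flat} estimate to $q_{12}(u_+,\Lambda^\sigma v_+)$, not the other way around, or you end up with $\Lambda^{2\sigma}$ on a single factor.
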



\subsection{The hyperbolic case}
We again choose $s_{1,2}\in (0, \frac{1}{r})$ with $s_1+s_2=\frac{1}{r}$. As in the elliptic case, we will be only interested in the cases $s_1=0, s_2=\frac{1}{r}$ and $s_2=0, s_1=\frac{1}{r}$.

We further split the hyperbolic case into the \textit{low frequency} and \textit{high frequency} cases respectively
\begin{align}
  |\eta|+|\xi-\eta| & \leq 2|\xi|, \label{hyp_low}\\
  |\eta|+|\xi-\eta| & > 2|\xi|. \label{hyp_high}
\end{align}

The low frequency case \eqref{hyp_low} is similar to the elliptic case. Indeed, using \eqref{q_bound_hyp} and H\"{o}lder's estimate as before, we will have
\begin{equation*}
 |\FT q_{12}(u_+, v_+)| \lesssim I^{\frac{1}{r}} \times \left (\int_{P_-(\eta)=\tau} \frac{dS_\eta}{|\nabla P_- (\eta)|} \left |\^{\Lambda^{s_1} u_0}(\eta)\^{\Lambda^{s_2} v_0}(\xi-\eta)\right |^{r'}\right )^\frac{1}{r'},
\end{equation*}
where
\begin{align*}
 I & =\int_{P_-(\eta)=\tau} \frac{dS_\eta}{|\nabla P_\pm (\eta)|} \left |\frac{\eta\wedge(\xi-\eta)}{|\eta||\xi-\eta|}\right|^r |\eta|^{-s_1 r}|\xi-\eta|^{-s_2 r}\\
 &\qquad\lesssim ||\xi|-|\tau||^\frac{r}{2}|\xi|^\frac{r}{2} \int\limits_{|\eta|+|\xi-\eta|\leq 2|\xi|} \frac{\delta(\tau-|\eta|+|\xi-\eta|)}{|\eta|^{(s_1+\frac{1}{2})r}|\xi-\eta|^{(s_2+\frac{1}{2})r}}\, d\eta.
\end{align*}
Now using \cite[Proposition 4.5]{foschi-klainerman:bilinear}, we obtain for the case $s_1=0, s_2=\frac{1}{r}$ (the other case is similar)
\begin{equation*}
 I\lesssim ||\xi|-|\tau||^{\frac{r}{2}}|\xi|^\frac{r}{2} |\xi|^A (||\xi|-|\tau||)^B,
\end{equation*}
where in the region $0\leq \tau \leq |\xi|$
\begin{align*}
 \ts A & \ts =\max \{(s_2+\frac{1}{2})r, \frac{3}{2}\}-(s_1+\frac{1}{2})r-(s_2+\frac{1}{2})r=-\frac{r}{2},\\
 \ts B & \ts =1-\max \{(s_2+\frac{1}{2})r, \frac{3}{2}\}=-\frac{r}{2}.
\end{align*}
and in the region $-|\xi|\leq \tau\leq 0$,
\begin{align*}
 \ts A & \ts =\max \{(s_1+\frac{1}{2})r, \frac{3}{2}\}-(s_1+\frac{1}{2})r-(s_2+\frac{1}{2})r=\frac{1}{2}-r,\\
 \ts B & \ts =1-\max \{(s_1+\frac{1}{2})r, \frac{3}{2}\}=-\frac{1}{2}.
\end{align*}
But then in the region $0\leq \tau \leq |\xi|$,
\begin{equation*}
 I\lesssim \frac{||\xi|-|\tau||^{\frac{r}{2}}|\xi|^\frac{r}{2}}{||\xi|-|\tau||^{\frac{r}{2}}|\xi|^{\frac{r}{2}}}\lesssim 1,
\end{equation*}
while in the region $-|\xi|\leq \tau\leq 0$,
\begin{equation*}
 I\lesssim \frac{||\xi|-|\tau||^{\frac{r}{2}}|\xi|^\frac{r}{2}}{||\xi|-|\tau||^{\frac{1}{2}}|\xi|^{r-\frac{1}{2}}}\lesssim 1.
\end{equation*}

In the high frequency case \eqref{hyp_high}, we will instead use \cite[Lemma 4.4]{foschi-klainerman:bilinear}\footnote{An examination of the proof of \cite[Lemma 4.4]{foschi-klainerman:bilinear} shows that the lower bound of the resulting integral in the case $|\eta|+|\xi-\eta|\geq c|\xi|$ is exactly $c$}, which gives
\begin{align*}
 I &=\frac{||\xi|-|\tau||^\frac{r}{2}|\xi|^\frac{r}{2}}{(\tau^2-|\xi|^2)^\frac{1}{2}}\int_{2}^\infty |x|\xi|+\tau|^{-(s_1+\frac{1}{2})r+1} |x|\xi|-\tau|^{-(s_2+\frac{1}{2})r+1}(x^2-1)^{-\frac{1}{2}}\, dx\\
 &\qquad\lesssim \frac{||\xi|-|\tau||^{\frac{r}{2}-\frac{1}{2}}|\xi|^\frac{r}{2}}{(|\xi|+|\tau|)^\frac{1}{2}} |\xi|^{1-r} \int_{2}^\infty \ts|x+\frac{\tau}{|\xi|}|^{-(s_1+\frac{1}{2})r+1}|x-\frac{\tau}{|\xi|}|^{-(s_2+\frac{1}{2})r+1}(x^2-1)^{-\frac{1}{2}}\, dx\\
 & \qquad\qquad\lesssim \int_2^\infty x^{2-(1+r)-1}\, dx=\int_2^\infty x^{-r}\, dx\lesssim 1.
\end{align*}

Proceeding similar to the elliptic case, we will obtain the following.
\begin{lemma}\label{hyp_lemma}
  Let $1\leq r\leq 2$ and $\sigma>\frac{1}{r}-\frac{1}{2}$, then
\begin{equation*}
 \|q_{12}(u_+, v_-)\|_{X_{\sigma, 0}^r}\lesssim \|u_0\|_{\^{H}_\sigma^r}\|v_0\|_{\^{H}_\sigma^r}.
\end{equation*}
\end{lemma}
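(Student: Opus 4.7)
My plan is to proceed in parallel with the proof of Lemma \ref{el_lemma}, now using the hyperbolic kernel bounds $I \lesssim 1$ just established in the low-frequency regime \eqref{hyp_low} and the high-frequency regime \eqref{hyp_high}.

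First, I would apply the same H\"{o}lder decomposition used in the elliptic case,
\[
|\FT q_{12}(u_+, v_-)(\tau, \xi)| \lesssim I(\tau, \xi)^{1/r}\, J(\tau, \xi)^{1/r'},
\]
where the data factor is
\[
J(\tau, \xi) = \int_{P_-(\eta) = \tau} \frac{dS_\eta}{|\nabla P_-(\eta)|}\, \bigl|\^{\Lambda^{s_1} u_0}(\eta)\, \^{\Lambda^{s_2} v_0}(\xi - \eta)\bigr|^{r'}.
\]
Multiplying by $\<\xi\>^\sigma$, raising to the $r'$-power, integrating in $(\tau,\xi)$ and invoking $I \lesssim 1$ in each regime, the estimate reduces to
\[
\|q_{12}(u_+, v_-)\|_{X_{\sigma, 0}^r}^{r'} \lesssim \iint \<\xi\>^{\sigma r'}\, J(\tau, \xi)\, d\tau\, d\xi.
\]

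Next I would collapse the hyperboloid measure via the co-area identity $d\tau\, dS_\eta/|\nabla P_-(\eta)| = d\eta$, reducing the $(\tau, \xi, \eta)$-integral to an unconstrained $(\xi, \eta)$-integral. Applying the Peetre-type inequality $\<\xi\>^\sigma \lesssim \<\eta\>^\sigma + \<\xi - \eta\>^\sigma$ and Fubini, the right-hand side then separates into $\|u_0\|_{\^{H}_\sigma^r}^{r'}\, \|v_0\|_{\^{H}_\sigma^r}^{r'}$, provided the weights $s_1, s_2$ and $\sigma$ can be balanced appropriately, just as in the elliptic argument preceding Lemma \ref{el_lemma}.

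The main obstacle, and the source of the improved hyperbolic threshold $\sigma > 1/r - 1/2$ in place of the elliptic $\sigma > 1/r$, is to extract the $1/2$-derivative gain that is available in the hyperbolic geometry but not in the elliptic one. I expect this gain to come from retaining, rather than immediately discarding, the extra $|\xi|^{(1-r)/2}$ factor hidden in $I$ in the high-frequency regime \eqref{hyp_high}, and the analogous factor arising from $(|\xi| - ||\eta| - |\xi - \eta||)^{1/2}$ in the symbol bound \eqref{q_bound_hyp} in the low-frequency regime \eqref{hyp_low}. In the high-frequency regime these extra factors can be re-absorbed using $|\xi| \lesssim |\eta| + |\xi - \eta|$, and in the low-frequency regime using $|\xi| \sim |\eta| + |\xi - \eta|$, so that effectively $(s_1, s_2)$ can be chosen with $s_1 + s_2 = 1/r - 1/2$ rather than $1/r$, yielding the half-derivative improvement claimed in the lemma.
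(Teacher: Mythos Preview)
Your first three paragraphs reconstruct exactly what the paper does: after the preceding computations establish $I\lesssim 1$ in both the low- and high-frequency hyperbolic regimes (with the choice $s_1+s_2=1/r$), the paper simply writes ``Proceeding similar to the elliptic case, we will obtain the following'' and states the lemma. Your H\"older split into $I^{1/r}J^{1/r'}$, the co-area collapse of the $\tau$-integral against the hyperboloid measure, and the Leibniz-type inequality $\<\xi\>^\sigma\lesssim\<\eta\>^\sigma+\<\xi-\eta\>^\sigma$ are precisely the ingredients of that elliptic argument transplanted verbatim.

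Your fourth paragraph, however, addresses something the paper does \emph{not}: the paper never supplies a separate mechanism for the sharper hyperbolic threshold $\sigma>1/r-1/2$. It runs the entire hyperbolic computation with the same constraint $s_1+s_2=1/r$ used in the elliptic case, which by the elliptic reasoning would only yield $\sigma>1/r$; the improved threshold is then simply asserted. Your proposed mechanism for the half-derivative gain is heuristic and, as written, does not close. The factor $|\xi|^{(1-r)/2}$ you point to (it appears in the low-frequency region $-|\xi|\le\tau\le 0$, paired with $||\xi|-|\tau||^{(r-1)/2}$) is exactly cancelled there since $||\xi|-|\tau||\sim|\xi|$ in that region, so nothing is left to retain; and in the high-frequency regime the residual power of $|\xi|$ is a \emph{loss} at low output frequency for $r>1$, not a gain to be absorbed by the inputs. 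If instead you try to rerun the low-frequency computation directly with $s_1+s_2=1/r-1/2$, then at the endpoint $\max\{(s_2+\tfrac12)r,\tfrac32\}=\tfrac32$, and the resulting exponents leave an unbalanced positive power of $|\xi|$ in $I$. The good news is that the sharper threshold is never actually used downstream: the main theorems require $\sigma=s-1>1/r$, for which the elliptic-style argument you outlined in your first three paragraphs already suffices.
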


The main estimate \eqref{bi_q_free} now follows from Lemmas \ref{el_lemma} and \ref{hyp_lemma}.

\begin{remark}
 We observe that if instead of $Q_{12}$ one has $Q_{0j}$ for $j=1, 2,$ in \eqref{Q}, then instead of  bounds \eqref{q_bound_el}-\eqref{q_bound_hyp}, one will have the following bounds for the normalized multiplier of $Q_{0j}$ (see \cite[Lemma 13.2]{foschi-klainerman:bilinear} for details)
\begin{align*}
 \frac{||\eta|(\xi-\eta)-|\xi-\eta|\eta|}{|\eta||\xi-\eta|} & \approx \frac{(|\eta|+|\xi-\eta|)^{\frac{1}{2}}(|\eta|+|\xi-\eta|-\xi|)^{\frac{1}{2}}}{|\eta|^\frac{1}{2}|\xi-\eta|^{\frac{1}{2}}},\\
 \frac{||\eta|(\xi-\eta)-|\xi-\eta|\eta|}{|\eta||\xi-\eta|} & \approx \frac{|\xi|^{\frac{1}{2}}(|\xi|-||\eta|-|\xi-\eta||)^{\frac{1}{2}}}{|\eta|^\frac{1}{2}|\xi-\eta|^{\frac{1}{2}}}.
\end{align*}
The estimates for the $Q_{0j}$ will then differ from those for $Q_{12}$ only in having a factor of $\tau^\frac{r}{2}$ instead of $|\xi|^\frac{r}{2}$ in the elliptic case, which can be taken care of just as before. Thus the result of the main theorem for the model problem trivially extends to the $Q_{0j}$ nonlinearity as well.
\end{remark}

\begin{remark}
 For $Q_0$, one has the following bounds for the normalized multiplier (see \cite[Lemma 13.2]{foschi-klainerman:bilinear} for details)
\begin{align*}
 \frac{|\eta||(\xi-\eta)|-\eta\cdot (\xi-\eta)}{|\eta||\xi-\eta|} & \approx \frac{(|\eta|+|\xi-\eta|)(|\eta|+|\xi-\eta|-\xi|)}{|\eta||\xi-\eta|},\\
 \frac{|\eta||(\xi-\eta)|+\eta\cdot (\xi-\eta)}{|\eta||\xi-\eta|} & \approx \frac{|\xi|(|\xi|-||\eta|-|\xi-\eta||)}{|\eta||\xi-\eta|}.
\end{align*}
Using these bounds, the bilinear estimate for $Q_0$ will follow from the uniform bound on
\[
I=|\tau-|\xi||^r\tau^r\int \frac{\delta(\tau-|\eta|-|\xi-\eta|)}{|\eta|^{(s_1+1)r}|\xi-\eta|^{(s_2+1)r}}\, d\eta
\]
\end{remark}
in the elliptic case, and
\[
I=|\tau-|\xi||^r|\xi|^r\int \frac{\delta(\tau-|\eta|+|\xi-\eta|)}{|\eta|^{(s_1+1)r}|\xi-\eta|^{(s_2+1)r}}\, d\eta
\]
in the hyperbolic case. These uniform bounds can be shown in exactly the same way as before. Thus the theorem for the model problem will extend to include the $Q_0$ null-form as well.


\section{The Ward wave map problem: trilinear estimates}\label{sec4}
In order to obtain the local well-posedness for the Ward wave map Cauchy problem, we need to establish the trilinear analog of the bilinear estimate \eqref{null} to take care of the cubic nonlinearity in \eqref{ward_null}. Thus, we need the trilinear estimate
\[
 \|wQ(u, v)\|_{X_{s-1, 0}^r}\lesssim \|w\|_{X_{s, b}^r}\|u\|_{X_{s, b}^r}\|v\|_{X_{s, b}^r},
\]
where $Q(u, v)$ is any of the three basic null-forms. But the last estimate will follow from \eqref{null} and the following multiplicative estimate
\[
 \|wQ(u, v)\|_{X_{s-1, 0}^r}\lesssim \|w\|_{X_{s, b}^r}\|Q(u, v)\|_{X_{s-1, 0}^r}.
\]
Thus, it suffices to prove
\begin{equation}\label{mult}
 X_{s, b}^r\cdot X_{s-1, 0}^r\embed X_{s-1, 0}^r,
\end{equation}
provided $1<r\leq 2$, $s>\frac{n}{r}$, $b>\frac{1}{r}$. Using the triangle inequality on the frequency side, it is not hard to see the following estimate,
\begin{equation*}
 \Lambda^\alpha(\phi\psi)\precsim(\Lambda^\alpha \phi)\psi+\phi\Lambda^\alpha \psi, \qquad \forall \alpha>0
\end{equation*}
for all $\phi$ and $\psi$ with $\~{\phi}, \~{\psi}\geq 0$, where $u\precsim v$ denotes the pointwise estimate $\~{u}(\tau, \xi)\leq \~{v}(\tau, \xi)$. Then, the proof of \eqref{mult} reduces to the following two embeddings:
\begin{align}
 X_{s, b}^r\cdot \^{L}^r & \embed \^{L}^r, \label{mult1}\\
 X_{1, b}^r\cdot X_{s-1, 0}^r & \embed \^{L}^r \label{mult2}.
\end{align}
The first of these, \eqref{mult1}, follows trivially from Young's inequality and appropriate H\"{o}lder's inequalities to show that
\[
 \^{L}^\infty_{t, x} \embed X_{s, b}^r.
\]
The second embedding \eqref{mult2}, is equivalent to the estimate
\[
 \|fg\|_{\^{L}^r_{t, x}}\lesssim \|f\|_{X_{1, b}^r}\|g\|_{X_{s-1, 0}^r},
\]
which we now prove. Using the definition of $\^{L}^r$, we have by Young's inequality
\begin{align*}
 \|fg\|_{\^{L}^r}=\|\~{f}*_{\tau, \xi}\~{g}\|_{L^{r'}}\lesssim \left \|\|\~{f}\|_{L^1_\tau}*_\xi \|\~{g}\|_{L^{r'}_\tau}\right \|_{L^{r'}_\xi}\lesssim \left \|\|\~{f}\|_{L^1_\tau}\right \|_{L^p_\xi} \left \|\|\~{g}\|_{L^{r'}_\tau}\right \|_{L^q_\xi},
\end{align*}
for some $p, q$, satisfying
\begin{equation}\label{exps_young}
 \frac{1}{r'}+1=\frac{1}{p}+\frac{1}{q}.
\end{equation}
Using H\"{o}lder's inequality, we can bound the previous by
\[
 \lesssim\left \|\<\xi\>\|\~{f}\|_{L^1_\tau}\right \|_{L^{r'}_\xi}\|\<\xi\>^{-1}\|_{L^m}\left \|\<\xi\>^{s-1}\|\~{g}\|_{L^{r'}_\tau}\right \|_{L^{r'}_\xi}\|\<\xi\>^{-(s-1)}\|_{L^l},
\]
where
\begin{equation}\label{exps_holder}\begin{split}
 \frac{1}{p} & =\frac{1}{r'}+\frac{1}{m},\\
 \frac{1}{q} & =\frac{1}{r'}+\frac{1}{l}.
\end{split}
\end{equation}
Notice that, since $b>\frac{1}{r}$,
\begin{align*}
 \left \|\<\xi\>\|\~{f}\|_{L^1_\tau}\right \|_{L^{r'}_\xi} & \lesssim \left \|\<\xi\>\|\<|\tau|-|\xi|\>^b\~{f}\|_{L^{r'}_\tau}\|\<|\tau|-|\xi|\>^{-b}\|_{L^r_\tau}\right \|_{L^{r'}_\xi}\\
 &\qquad  \lesssim \left \|\<\xi\>\|\<|\tau|-|\xi|\>^b\~{f}\|_{L^{r'}_\tau}\right \|_{L^{r'}_\xi}=\|f\|_{X_{1, b}^r},
\end{align*}
and
\[
 \left \|\<\xi\>^{s-1}\|\~{g}\|_{L^{r'}_\tau}\right \|_{L^{r'}_\xi}=\|g\|_{X_{s-1, 0}^r}.
\]
But then it only remains to show that there is an appropriate choice of $m$ and $l$ in \eqref{exps_holder}, which insures that
\begin{equation}\label{needed_bound}
 \|\<\xi\>^{-1}\|_{L^m}, \|\<\xi\>^{-(s-1)}\|_{L^l}\lesssim 1.
\end{equation}
For this we need $m>2$ and $l(s-1)>2$. The later will follow from $l>2r$, since $s-1>\frac{1}{r}$. So let us choose
\[
 l=2r+\epsilon.
\]
From \eqref{exps_holder}-\eqref{exps_young} we will then have
\[
 \frac{1}{r'}+1=\frac{1}{r'}+\frac{1}{m}+\frac{1}{r'}+\frac{1}{l},
\]
from which we can solve for
\[
 \frac{1}{m}=1-\frac{1}{r'}-\frac{1}{l}=\frac{1}{r}-\frac{1}{2r+\epsilon}.
\]
If $r=2$, then obviously $\frac{1}{m}<\frac{1}{2}$, which would imply that $m>2$. On the other hand, if $r=1+\delta$ for some $0<\delta<1$, then
\[
 \frac{1}{m}=\frac{1}{1+\delta}-\frac{1}{2(1+\delta)+\epsilon}<\frac{1}{2},
\]
provided
\[
 2(1+\delta)+\epsilon<\frac{1}{\frac{1}{1+\delta}-\frac{1}{2}}=\frac{2(1+\delta)}{1-\delta},
\]
or
\[
 0<\epsilon<\frac{2(1+\delta)}{1-\delta}-2(1+\delta).
\]
Since the right hand side of the last inequality is positive, due to $0<\delta<1$, there is an $\epsilon$, for which $l>2r$ and $m>2$, thus guaranteeing \eqref{needed_bound} and finishing the proof of \eqref{mult2}.


\appendix


\section{The transfer principle}\label{appA}
In this appendix we state and prove the transfer principle for elements of $X_{s, b}^r$ spaces. This is very similar to the $L^2$ case, although, due to the nature of our spaces, we stick to the frequency side, when proving the transfer principle. We closely follow \cite[Section 3.2]{selberg_thesis} for the first half of the appendix. See also \cite[Propositions 3.6, 3.7]{kl-sel:surrvey} and \cite[Lemma 2.1]{grunrock:mkdv}.

Given a function $u\in X_{s, b}^r$, it can be uniquely decomposed as
\[
 u=u_++u_-,
\]
where $\~{u_+}$ is supported in $[0, \infty)\times \R^n$ and $\~{u_-}$ is supported in $(-\infty, 0]\times \R^n$. Clearly $u_\pm\in X_{s, b}^r$, and
\[
 \|u\|_{X_{s, b}^r}^{r'}=\|u_+\|_{X_{s, b}^r}^{r'}+\|u_-\|_{X_{s, b}^r}^{r'}.
\]
When $b>\frac{1}{r}$, one has the following characterization of $X_{s, b}^r$.

\begin{prop}\label{int_rep_prop}
 If $b>\frac{1}{r}$, then
\begin{enumerate}
 \item [(a)] $X_{s, b}^r\subset C_b(\R, \^{H}_s^r)$ in the sense that any tempered distribution $u\in X_{s, b}^r$ has a unique representative 
\[
 t\mapsto u(t) \qquad \text{in}\quad C_b(\R, \^{H}_s^r),
\]
and
\[
 \|u(t)\|_{\^{H}_s^r}\leq C\|u\|_{X_{s, b}^r} \qquad \text{for all } t\in\R,
\]
where $C$ depends only on $b$ and $r$.

 \item [(b)] $u\in X_{s, b}^r$ iff there exist $f_+, f_-\in L^{r'}(\R, \^{H}_s^r)$, such that
\begin{align*}
& \^{f_+}(\rho)(\xi)=0 \qquad \text{for } |\xi|<-\rho,\\
& \^{f_-}(\rho)(\xi)=0 \qquad \text{for } |\xi|<\rho,
\end{align*}
and
\begin{equation}\label{int_rep}
 u_\pm(t)=\frac{1}{2\pi}\int_{-\infty}^\infty \frac{e^{it(\rho\pm D)}f_\pm(\rho)}{(1+|\rho|)^b}\, d\rho.
\end{equation}
Moreover,
\[
 \|u_\pm\|_{X_{s, b}^r}=\|f_\pm\|_{L^{r'}(\R, \^{H}_s^r)}.
\]
\end{enumerate}
\end{prop}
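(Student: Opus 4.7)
The plan is to establish part (b) first and then derive (a) from it. Starting from the unique decomposition $u = u_+ + u_-$ (where $\~{u_\pm}$ is supported in $\pm\tau\ge 0$), I would perform the change of variables $\rho = \tau \mp |\xi|$ inside the frequency inversion formula. For $u_+$, writing $\tau = \rho+|\xi|$ reshapes the $\tau$-integral into an integral over $\rho$ constrained by $\rho+|\xi|\ge 0$, i.e.\ $|\xi|\ge -\rho$; the remaining spatial-frequency integral is exactly $e^{it(\rho+D)}$ applied to the function whose spatial Fourier transform is $\<\rho\>^b\~{u_+}(\rho+|\xi|,\xi)$. This dictates the ansatz
\[
\^{f_\pm(\rho)}(\xi) := \<\rho\>^b\,\~{u_\pm}(\rho\pm|\xi|,\,\xi),
\]
extended by zero outside the support regions, and the stated support conditions $\^{f_\pm}(\rho)(\xi)=0$ for $|\xi|<\mp\rho$ drop out automatically.

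The next step is to verify the isometry $\|f_\pm\|_{L^{r'}(\R,\^{H}_s^r)} = \|u_\pm\|_{X_{s,b}^r}$. On the support of $\~{u_\pm}$ one has $|\tau|=\pm\tau$, hence $|\tau|-|\xi|=\pm\rho$ and $\<|\tau|-|\xi|\>^b=\<\rho\>^b$; the substitution $\tau\leftrightarrow\rho$ has unit Jacobian, so a direct Fubini identifies the two mixed $L^{r'}$-norms. Running the same calculation backward shows that any $f_\pm\in L^{r'}(\R,\^{H}_s^r)$ obeying the stated support conditions produces, via \eqref{int_rep}, an element $u_\pm\in X_{s,b}^r$ with the advertised norm, which supplies the ``if'' direction of (b).

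With (b) in hand, (a) follows cleanly. Applying the triangle inequality in $\^{H}_s^r$ to \eqref{int_rep} and using that $e^{it(\rho\pm D)}$ is an isometry on $\^{H}_s^r$ (being a Fourier multiplier of modulus one), I would bound
\[
\|u_\pm(t)\|_{\^{H}_s^r} \le \frac{1}{2\pi}\int_{\R}\<\rho\>^{-b}\|f_\pm(\rho)\|_{\^{H}_s^r}\,d\rho \le C_{b,r}\,\|f_\pm\|_{L^{r'}(\R,\^{H}_s^r)}
\]
by H\"older's inequality with exponents $(r,r')$. The hypothesis $b>\frac{1}{r}$ is precisely what ensures $\int_\R\<\rho\>^{-br}\,d\rho<\infty$, and by the isometry $\|f_\pm\|_{L^{r'}(\R,\^{H}_s^r)} = \|u_\pm\|_{X_{s,b}^r}\le\|u\|_{X_{s,b}^r}$, uniformly in $t$. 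Continuity of $t\mapsto u_\pm(t)$ into $\^{H}_s^r$ will then come from dominated convergence inside \eqref{int_rep}, with the integrable majorant $\<\rho\>^{-b}\|f_\pm(\rho)\|_{\^{H}_s^r}$ paired with strong continuity of $t\mapsto e^{it(\rho\pm D)}f_\pm(\rho)$ in $\^{H}_s^r$ for each fixed $\rho$.

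The only real subtlety, and the step I would be most careful about, is the bookkeeping around the splitting $u=u_++u_-$: one needs the restrictions of $\~{u}$ to the half-spaces $\{\pm\tau\ge 0\}$ to define genuine $L^{r'}_{\tau,\xi}$ functions with no atomic mass at $\tau=0$, which is automatic since $\~{u}$ is already an $L^{r'}$ function and hence absolutely continuous with respect to Lebesgue measure. Beyond that, every step reduces to Fubini, H\"older, and the obvious $L^{r'}$-additivity $\|u\|_{X_{s,b}^r}^{r'}=\|u_+\|_{X_{s,b}^r}^{r'}+\|u_-\|_{X_{s,b}^r}^{r'}$ arising from disjoint supports in $\tau$.
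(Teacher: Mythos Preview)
Your proposal is correct and follows essentially the same approach as the paper: define $\^{f_\pm(\rho)}(\xi)=\<\rho\>^b\~{u_\pm}(\rho\pm|\xi|,\xi)$ via the substitution $\rho=\tau\mp|\xi|$, verify the isometry by Fubini, and then derive (a) from (b) via H\"older (using $b>\tfrac{1}{r}$) and dominated convergence. The paper phrases the construction as a composition of two isometries $u\leftrightarrow F\leftrightarrow f_\pm$ and adds an explicit distributional check that $u_\pm$ agrees with the integral in \eqref{int_rep}, but the content is the same as what you wrote.
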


\begin{remark}
 The representation \eqref{int_rep} is equivalent to a decomposition with respect to the foliation of the Fourier space by the two families of cones
\[
 \sN_+(\rho): \tau=|\xi|+\rho, |\xi|>-\rho, \qquad \text{and } \qquad \sN_-(\rho): \tau=-|\xi|+\rho, |\xi|>\rho,
\]
where the family parameter $\rho\in(-\infty, \infty)$. Indeed, we can write
\begin{equation}\label{int_rep_F}\begin{split}
 \~{u_\pm}(\tau, \xi)
 &=\int \~{u_\pm}(\rho\pm |\xi|, \xi)\delta(\rho-(\tau\mp |\xi|))\, d\rho\\
 &=\int \frac{\^{f_\pm}(\rho)(\xi)}{(1+|\rho|)^b}\delta(\rho-(\tau\mp |\xi|))\, d\rho,
\end{split}\end{equation}
where
\[
 \^{f_\pm}(\rho)(\xi)\delta(\rho-(\tau\mp |\xi|))=\~{u_\pm}(\tau, \xi)(1+|\rho|)^b\delta(\rho-(\tau\mp |\xi|))
\]
define $W^{s, r'}$ measures on the cones $\sN_\pm(\rho)$, and as such, represent translations of the Fourier transforms of $\^{H}_s^r$ solutions of the free wave equation restricted respectively to the upper and lower Fourier half-spaces.

Taking the time inverse Fourier transforms of \eqref{int_rep_F}, we have
\[
 \^{u_\pm}(\xi)=\frac{1}{2\pi}\int \frac{e^{it(\rho\pm|\xi|)}\^{f_\pm}(\rho)(\xi)}{(1+|\rho|)^b}\, d\rho.
\]
Taking the space inverse Fourier transform of this identity will result in \eqref{int_rep}.
\end{remark}

\begin{proof}
 The existence of part (a) follows from part (b). Furthermore, from \eqref{int_rep}, H\"{o}lder's inequality and the fact that $b>\frac{1}{r}$,
\begin{align*}
 \|u(t)\|_{\^{H}_s^r}
 & \leq \int \frac{\|f_+(\rho)\|_{\^{H}_s^r}}{(1+|\rho|)^b}\, d\rho+\int \frac{\|f_-(\rho)\|_{\^{H}_s^r}}{(1+|\rho|)^b}\, d\rho\\
 & \leq C\left (\|f_+\|_{L^{r'}(\R, \^{H}_s^r)}+\|f_-\|_{L^{r'}(\R, \^{H}_s^r)}\right )\\
 & \leq C\|u\|_{X_{s, b}^r},
\end{align*}
where $C=2^{r'}\left (\int (1+|\rho|)^{-rb}\, d\rho\right )^{\frac{1}{r}}$.

The uniqueness of part (a) is straightforward, so it remains to prove part (b). 

Let us define the multipliers
\begin{align*}
 \^{\Lambda u} & = \<\xi\> \^{u}\\
 \~{\Lambda_-u} & = \<|\tau|-|\xi|\>\~{u}.
\end{align*}
Using these, we define the isometry
\[
 u\longleftrightarrow F=\FT(\Lambda^s\Lambda_-^b u), \qquad X_{s, b}^r\longleftrightarrow L^{r'},
\]
under which $u_+$ and $u_-$ correspond to
\[
 F_+=\chi_{[0, \infty)\times \R^n}F, \qquad \text{and} \qquad F_-=\chi_{(-\infty, 0]\times \R^n}F
\]
respectively. We define another isometry,
\[
 F_\pm \longleftrightarrow f_\pm, \qquad L^{r'}(\R^{1+n})\longleftrightarrow L^{r'}(\R, \^{H}_s^r)
\]
by
\begin{align*}
 (1+|\xi|)^s\^{f_+(\rho)}(\xi) & = F_+(\rho+|\xi|, \xi),\\
 (1+|\xi|)^s\^{f_-(\rho)}(\xi) & = F_-(\rho-|\xi|, \xi).
\end{align*}
Notice that under the composition of the isometries,
\[
 \^{f_\pm}(\rho)(\xi)=\frac{1}{(1+|\xi|)^s}F_\pm (\rho\pm |\xi|, \xi)=(1+|\rho|)^b\~{u_\pm}(\rho\pm |\xi|, \xi).
\]
It is easy to check that $f_\pm$ is in $L^{r'}(\R, \^{H}_s^r)$, iff $F_\pm$ is in $L^{r'}(\R^{1+n})$. Thus, $u \in X_{s, b}^r$, iff $f_+, f_- \in L^{r'}(\R, \^{H}_s^r)$, and we set
\[
 v_+(t)=\frac{1}{2\pi}\int_{-\infty}^\infty \frac{e^{it(\rho+ D)}f_+(\rho)}{(1+|\rho|)^b}\, d\rho, \qquad \text{and} \qquad v_-(t)=\frac{1}{2\pi}\int_{-\infty}^\infty \frac{e^{it(\rho-D)}f_-(\rho)}{(1+|\rho|)^b}\, d\rho
\]
for $t\in\R$. By the dominated convergence theorem $v_+, v_-\in C(\R, \^{H}_s^r)$, and $v_\pm$ are tempered distributions, as the $\^{H}_s^r$ norm of $v_\pm(t)$ is bounded uniformly in $t$. We next prove that $u_+=v_+$ in the sense of distributions. The proof of $u_-=v_-$ is similar.

We have for $\phi\in \sS(\R^{1+n})$,
\begin{align*}
 \<v_+, \phi\>
 & =\int \<v_+(t), \phi(t)\>\, dt\\
 & =\int \left \<\frac{1}{2\pi}\int \frac{e^{it(\rho+ D)}f_+(\rho)}{(1+|\rho|)^b}\, d\rho, \phi(t)\right \>\, dt\\
 & =\iint \left \<\frac{1}{2\pi} \frac{e^{it(\rho+ D)}f_+(\rho)}{(1+|\rho|)^b}, \phi(t)\right \>\, d\rho dt\\
 & =\iiint \frac{1}{2\pi} \frac{e^{it(\rho+ |\xi|)}\^{f_+}(\rho)(\xi)}{(1+|\rho|)^b} \FT_\xi^{-1}\phi(t)(\xi)\, d\xi d\rho dt\\
 & =\iiint \frac{1}{2\pi} e^{it(\rho+ |\xi|)}\~{u_+}(\rho+|\xi|, \xi) \FT_\xi^{-1}\phi(t)(\xi)\, d\xi d\rho dt\\
 & =\iint \~{u_+}(\tau, \xi)\left (\frac{1}{2\pi}\int e^{it\tau} \FT_\xi^{-1} \phi(t)(\xi)\, dt\right )\, d\tau d\xi\\
 & =\iint \~{u_+}(\tau, \xi)\FT_{\tau, \xi}^{-1}\phi (\tau, \xi)\, d\tau d\xi\\
 & =\<u_+, \phi\>.
\end{align*}
\end{proof}

Using the integral representation \eqref{int_rep}, we will prove a useful corollary, which allows one to transfer multilinear estimates involving solutions of the free wave equation to corresponding estimates for elements of $X_{s, b}^r$ spaces with $b>\frac{1}{r}$. This result is appropriately called the {\it transfer principle}.

\begin{prop}\label{transfer_prop}
 Assume that $T:\^{H}_{s_1}^r(\R^n)\times \dots \times \^{H}_{s_k}^r(\R^n)\to \^{H}_\sigma^r(\R^n)$ is a $k$-linear operator, and let $b>\frac{1}{r}$.
\begin{enumerate}
 \item [(a)] If
\begin{equation}\label{freewave_mult}
 \|T(e^{\lambda_1 itD}f_1, \dots, e^{\lambda_k itD}f_k)\|_{\^{L}_t^{p}(\^{L}_x^q)}\leq C\|f_1\|_{\^{H}_{s_1}^r}\dots \|f_k\|_{\^{H}_{s_k}^r},
\end{equation}
where $\lambda$ is a fixed $k$-tuple in $\{-1, 1\}$, then
\begin{equation}\label{u_mult}
 \|T(u_1, \dots, u_k)\|_{\^{L}_t^{p}(\^{L}_x^q)}\leq C\|u_1\|_{X_{s_1, b}^r}\dots \|u_k\|_{X_{s_k, b}^r}
\end{equation}
for all $(u_1, \dots, u_k)\in X_{s_1, b}^r\times \dots \times X_{s_k, b}^r$, such that
\begin{equation}\label{signs}
 \supp \^{u_j}\subseteq \left \{
\begin{array}{ll}
 [0, \infty)\times \R^n   &\qquad \text{if } \lambda_j=1\\
 (-\infty, 0]\times \R^n &\qquad \text{if } \lambda_j=-1.
\end{array}\right .
\end{equation}

 \item [(b)] If \eqref{freewave_mult} holds for all $\lambda\in\{-1, 1\}^k$, then \eqref{u_mult} holds for all $(u_1, \dots, u_k)\in X_{s_1, b}^r\times \dots \times X_{s_k, b}^r$.
\end{enumerate}
\end{prop}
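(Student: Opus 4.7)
The plan is to reduce the multilinear estimate on $X_{s,b}^r$ functions to the free-wave hypothesis \eqref{freewave_mult} via the integral representation from Proposition \ref{int_rep_prop}(b). For part (a), each $u_j$ has the half-space Fourier support \eqref{signs}, so it coincides with $u_{j,\lambda_j}$ in the notation of that proposition and therefore admits a representation
\[
u_j(t) = \frac{1}{2\pi}\int_{-\infty}^{\infty} \frac{e^{it(\rho_j+\lambda_j D)} f_j(\rho_j)}{(1+|\rho_j|)^b}\, d\rho_j, \qquad \|f_j\|_{L^{r'}(\R,\^{H}_{s_j}^r)} = \|u_j\|_{X_{s_j,b}^r},
\]
for some $f_j \in L^{r'}(\R,\^{H}_{s_j}^r)$ with the appropriate spatial Fourier support.

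Substituting this into $T(u_1,\ldots,u_k)$, using the factorization $e^{it(\rho_j+\lambda_j D)} = e^{it\rho_j}\,e^{it\lambda_j D}$, and exploiting $k$-linearity to pull the scalar factors $e^{it\rho_j}$ and the $\rho$-integrals outside the multilinear expression, I obtain
\[
T(u_1,\ldots,u_k)(t) = \frac{1}{(2\pi)^k}\int \frac{e^{it(\rho_1+\cdots+\rho_k)}\, T(e^{it\lambda_1 D}f_1(\rho_1),\ldots,e^{it\lambda_k D}f_k(\rho_k))}{\prod_{j=1}^k (1+|\rho_j|)^b}\,d\rho_1\cdots d\rho_k.
\]
The scalar phase $e^{it(\rho_1+\cdots+\rho_k)}$ only translates the temporal Fourier transform, so it is invisible to the $\^{L}_t^p$ norm. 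I would then apply Minkowski's inequality to move the $\^{L}_t^p(\^{L}_x^q)$ norm inside the $k$-fold $\rho$-integration, invoke \eqref{freewave_mult} pointwise in $(\rho_1,\ldots,\rho_k)$, and factor the resulting product over $j$, arriving at
\[
\|T(u_1,\ldots,u_k)\|_{\^{L}_t^p(\^{L}_x^q)} \lesssim \prod_{j=1}^k \int_{-\infty}^{\infty} \frac{\|f_j(\rho_j)\|_{\^{H}_{s_j}^r}}{(1+|\rho_j|)^b}\, d\rho_j.
\]
Each factor is then controlled by H\"older's inequality, with the weight in $L^r(\R)$ and $\|f_j(\cdot)\|_{\^{H}_{s_j}^r}$ in $L^{r'}(\R)$. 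The condition $\|(1+|\rho|)^{-b}\|_{L^r(\R)} < \infty$ is precisely where the assumption $b > 1/r$ is used, and recognizing $\|f_j\|_{L^{r'}(\R,\^{H}_{s_j}^r)} = \|u_j\|_{X_{s_j,b}^r}$ closes part (a).

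For part (b), I would use the unique decomposition $u_j = u_{j,+}+u_{j,-}$ recalled at the start of the appendix and expand $T(u_1,\ldots,u_k)$ by multilinearity into a sum of $2^k$ terms indexed by $\lambda \in \{-1,1\}^k$. Each term satisfies the half-space support hypothesis of part (a) for the corresponding $\lambda$, so part (a) applies term by term, and the triangle inequality together with $\|u_{j,\pm}\|_{X_{s_j,b}^r} \le \|u_j\|_{X_{s_j,b}^r}$ yields the full estimate. The step requiring most care is the Minkowski pass-through for the mixed Fourier-Lebesgue norm $\^{L}_t^p(\^{L}_x^q)$; after unwinding the definition, this reduces to classical Minkowski on the dual mixed Lebesgue space $L^{p'}_\tau L^{q'}_\xi$ of space-time Fourier transforms, so the obstacle is notational rather than substantive once the integral representation is in place.
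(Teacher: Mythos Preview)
Your proposal is correct and follows essentially the same route as the paper: invoke the integral representation \eqref{int_rep} from Proposition~\ref{int_rep_prop}(b), pull the $\rho$-integrals outside by multilinearity, observe that the accumulated phase $e^{it(\rho_1+\cdots+\rho_k)}$ becomes a translation in the $\tau$-variable and hence leaves the $\^{L}_t^p(\^{L}_x^q)$ norm unchanged, then apply Minkowski, the hypothesis \eqref{freewave_mult}, and H\"older in $\rho$ using $b>\tfrac{1}{r}$. The only cosmetic difference is that the paper carries out the unpacking on the Fourier side by writing $T$ through its multiplier $m$ and arriving at the shifted transform $\FT T(U_1 f_1(\rho_1),U_2 f_2(\rho_2))(\tau+\rho_1+\rho_2,\xi)$, whereas you pull out the scalar phases at the operator level; these are two presentations of the same computation, and your handling of part~(b) via the $2^k$-term expansion is exactly what the paper leaves implicit.
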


\begin{proof}
We assume that $T$ is bilinear to keep the notation simple, the general case is similar. Let us denote $U_\pm=e^{\pm itD}$. From \eqref{int_rep},
\[
 \~{u_j}(\tau, \xi)=\frac{1}{2\pi}\int \frac{\FT_{t, x} (e^{it\rho}U_jf_j(\rho))}{(1+|\rho|)^b}\, d\rho=\frac{1}{2\pi}\int \frac{\~{U_jf_j(\rho)}(\tau-\rho, \xi)}{(1+|\rho|)^b}\, d\rho,
\]
where $U_j=U_+$, or $U_j=U_-$, depending on whether $\lambda_j=1$, or $\lambda_j=-1$. Denoting the multiplier of $T$ by $m$, and using $\Xi=(\tau, \xi)$, we have
\begin{align*}
 & \FT T(u_1, u_2)(\Xi)\\
 &\qquad\qquad =\iint m(\Xi_1, \Xi_2)\~{u_1}(\Xi_1)\~{u_2}(\Xi_2)\delta (\Xi-\Xi_1-\Xi_2)\, d\Xi_1 d\Xi_2\\
 &\qquad\qquad =\iint m(\Xi_1, \Xi_2) \frac{1}{2\pi}\int \frac{\~{U_1f_1(\rho_1)}(\tau_1-\rho_1, \xi_1)}{(1+|\rho_1|)^b}\, d\rho_1\\
 &\qquad\qquad\qquad\qquad\qquad\qquad\qquad \times \frac{1}{2\pi}\int \frac{\~{U_2f_2(\rho_2)}(\tau_2-\rho_2, \xi_2)}{(1+|\rho_2|)^b}\, d\rho_2\, \delta (\Xi-\Xi_1-\Xi_2)\, d\Xi_1 d\Xi_2\\
 &\qquad\qquad =\frac{1}{4\pi^2}\iint \frac{\FT T(U_1f_1(\rho_1), U_2f_2(\rho_2))(\tau+\rho_1+\rho_2, \xi)}{(1+|\rho_1|)^b(1+|\rho_2|)^b}\, d\rho_1d\rho_2,
\end{align*}
where we used Fubini's theorem in the last step. Then using Minkowski's inequality, \eqref{freewave_mult}, H\"{o}lder's inequality and Proposition \ref{int_rep_prop},
\begin{align*}
 \|T(u_1, u_2)\|_{\^{L}_t^{p}(\^{L}_x^q)}
 & \lesssim \iint \frac{\|T(U_1f_1(\rho_1), U_2f_2(\rho_2))\|_{\^{L}_t^{p}(\^{L}_x^q)}}{(1+|\rho_1|)^b(1+|\rho_2|)^b}\, d\rho_1d\rho_2\\
 & \lesssim \iint \frac{\|f_1(\rho_1)\|_{\^{H}_{s_1}^r}\|f_2(\rho_2)\|_{\^{H}_{s_k}^r}}{(1+|\rho_1|)^b(1+|\rho_2|)^b}\, d\rho_1d\rho_2\\
 & \lesssim \|f_1\|_{L^{r'}(\^{H}_{s_1}^r)}\|f_2\|_{L^{r'}(\^{H}_{s_2}^r)}\\
 & = \|u_1\|_{X_{s_1, b}^r}\|u_2\|_{X_{s_2, b}^r}.
\end{align*}
\end{proof}


\section{The general well-posedness theorem}\label{appB}
In this appendix we will state the general well-posedness theorem for nonlinear wave equations with data in $\^{H}_s^r\times \^{H}_{s-1}^r$. We do not include the proof here, as, with minor differences, it is similar to the proof of the analogous theorem in the $L^2$ case, for which we refer the reader to \cite[Theorem 4.1]{selberg_thesis} (see also \cite[Theorem 5.3]{kl-sel:surrvey}).

The proof of the general well-posedness theorem follows from appropriate estimates for the solution to the linear wave equation in the relevant solution spaces, which we state first. This result is again similar to the analogous result in the $L^2$ case.

Consider the following Cauchy problem for the linear wave equation,
\begin{align}
 & \Box u=F(t, x), \quad (t, x)\in \R^{1+n}, \label{lin_wave}\\
 & (u, \del_t u)_{t=0} = (f, g)\in \^{H}_s^r\times \^{H}_{s-1}^r.\label{lin_wave_IC}
\end{align}

\begin{theorem}[c.f. Theorem 13 in \cite{selberg_thesis}]\label{LW}
 Assume $s\in \R$, $\frac{1}{r}<b<1$, $\epsilon \in (0, 1-b)$, $F\in X_{s-1, b+\epsilon-1}^r$ and
\[
 \chi \in C_c^\infty (\R), \qquad \chi=1 \text{ on } [-1, 1], \qquad \supp \chi\subseteq (-2, 2).
\]
Let $0<T<1$ and define
\[
 u(t)=\chi(t) u_0+\chi(t/T)u_1+u_2,
\]
where
\begin{align*}
 u_0 & = \cos (tD)\cdot f+D^{-1} \sin (tD) g,\\
 u_1 & =\int_0^tD^{-1} \sin ((t-t')D)\cdot F_1(t')\, dt',\\
 u_2 & =\Box^{-1} F_2,
\end{align*}
and
\[
 F=F_1+F_2=\phi(T^{1/2}\Lambda_-)F+(1-\phi(T^{1/2}\Lambda_-))F,
\]
with
\[
 \phi\in C_c^\infty(\R), \qquad \phi=1 \text{ on } [-2, 2], \qquad \supp\phi\subseteq (-4, 4).
\]
Then
\[
  \|u\|_{Z_{s, b}^r}\leq C\left (\|f\|_{\^{H}_s^r}+\|g\|_{\^{H}_{s-1}^r}+T^{\epsilon/2}\|F\|_{X_{s-1, b+\epsilon-1}^r}\right ),
\]
where $C$ depends only on $\chi$ and $b$. Furthermore, $u$ is the unique solution of the Cauchy problem \eqref{lin_wave}-\eqref{lin_wave_IC} such that $u\in C(0, T], \^{H}_s^r)\cap C^1[(0, T], \^{H}_{s-1}^r)$.
\end{theorem}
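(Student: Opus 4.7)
The plan is to bound the three summands $\chi u_0$, $\chi(t/T) u_1$, and $u_2$ separately in the norm $\|\cdot\|_{Z_{s,b}^r}=\|\cdot\|_{X_{s,b}^r}+\|\partial_t\cdot\|_{X_{s-1,b}^r}$ and then add via the triangle inequality. The uniqueness and continuity claims follow from the embedding $X_{s,b}^r\hookrightarrow C_b(\R,\^{H}_s^r)$ of Proposition~\ref{int_rep_prop}(a) (which applies because $b>\tfrac1r$) together with a standard reconstruction argument for the Cauchy data.

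\textbf{The homogeneous piece.} Write $u_0=\tfrac12(e^{itD}+e^{-itD})f+\tfrac1{2i}D^{-1}(e^{itD}-e^{-itD})g$. The space-time Fourier transform of $\chi(t)e^{\pm itD}\phi$ equals $\^{\chi}(\tau\mp|\xi|)\^{\phi}(\xi)$, and the reverse triangle inequality gives $||\tau|-|\xi||\le|\tau\mp|\xi||$. Changing variable $\sigma=\tau\mp|\xi|$ yields
$$\|\chi(t)e^{\pm itD}\phi\|_{X_{s,b}^r}^{r'}\le\Big(\int\langle\sigma\rangle^{br'}|\^{\chi}(\sigma)|^{r'}\,d\sigma\Big)\|\phi\|_{\^{H}_s^r}^{r'},$$
with the first factor finite by the Schwartz decay of $\^{\chi}$. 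The same computation controls $\partial_t(\chi u_0)$ in $X_{s-1,b}^r$, giving $\|\chi u_0\|_{Z_{s,b}^r}\lesssim\|f\|_{\^{H}_s^r}+\|g\|_{\^{H}_{s-1}^r}$.

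\textbf{The high-modulation piece.} On $\supp\~{F_2}$ we have $\langle|\tau|-|\xi|\rangle\ge 2T^{-1/2}\ge 2$ (since $T<1$), so $\Box^{-1}$ acts as a bounded Fourier multiplier with symbol $(|\xi|^2-\tau^2)^{-1}$. One checks the pointwise weight comparison
$$\frac{\langle\xi\rangle\,\langle|\tau|-|\xi|\rangle^{1-\epsilon}}{|\tau^2-|\xi|^2|}=\frac{\langle\xi\rangle}{|\tau|+|\xi|}\cdot\frac{\langle|\tau|-|\xi|\rangle^{1-\epsilon}}{||\tau|-|\xi||}\lesssim T^{\epsilon/2}$$
on $\supp\~{F_2}$: the first factor is $\lesssim 1$ for $|\xi|\ge\tfrac12$ directly and for $|\xi|<\tfrac12$ via $|\tau|+|\xi|\ge||\tau|-|\xi||\gtrsim T^{-1/2}$, while the second is $\lesssim||\tau|-|\xi||^{-\epsilon}\lesssim T^{\epsilon/2}$. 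Therefore $\|u_2\|_{Z_{s,b}^r}\lesssim T^{\epsilon/2}\|F\|_{X_{s-1,b+\epsilon-1}^r}$; the extra $|\tau|$ appearing for $\partial_t u_2$ is absorbed by $|\tau|+|\xi|$ in the denominator.

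\textbf{The low-modulation Duhamel piece.} Solving $\Box v^\rho=e^{it\rho}G_\rho$ with vanishing Cauchy data by separation of variables and superposing via Fourier inversion in $t$ gives the explicit formula
$$\^{u_1}(t,\xi)=\int\~{F_1}(\rho,\xi)\Big[\frac{e^{it\rho}}{|\xi|^2-\rho^2}-\frac{e^{it|\xi|}}{2|\xi|(|\xi|-\rho)}-\frac{e^{-it|\xi|}}{2|\xi|(|\xi|+\rho)}\Big]d\rho,$$
whose apparent poles at $\rho=\pm|\xi|$ cancel among the three terms. The plan is to view the bracket as a sum of three modulated free-wave contributions concentrated near the cones $\tau=\rho$, $\tau=|\xi|$, $\tau=-|\xi|$; multiply by $\chi(t/T)$, apply the Step~1 estimate to each (whose Fourier transform is $T\^{\chi}(T\cdot)$, so the Schwartz decay gives a bound uniform in the shift), and integrate in $\rho$ against $\~{F_1}$ via Minkowski and a H\"older inequality in $\rho$ against the weight $\langle|\tau|-|\xi|\rangle^{b+\epsilon-1}$. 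The compact modulation support $\langle|\tau|-|\xi|\rangle\le 4T^{-1/2}$ on $\~{F_1}$, combined with $b+\epsilon-1<0$ and $\epsilon<1-b$, produces the factor $T^{\epsilon/2}$ from the H\"older integral. Both the $X_{s,b}^r$ norm of $\chi(t/T)u_1$ and the $X_{s-1,b}^r$ norm of its time derivative are bounded this way.

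The main obstacle is Step~3: the three singular-looking pieces in the kernel cancel only when combined, so the argument must be organized to exploit this cancellation near $\rho=\pm|\xi|$ while still decomposing into enough modulated free-wave pieces for Step~1 to apply. A careful frequency-localization of $\~{F_1}$ according to the three regions $\rho\approx|\xi|$, $\rho\approx-|\xi|$, and $|\rho^2-|\xi|^2|\gtrsim 1$ is likely the cleanest way to reconcile these two requirements, after which the $T^{\epsilon/2}$ gain emerges directly from the modulation localization of $F_1$.
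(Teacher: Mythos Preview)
The paper does not actually prove this theorem: Appendix~B explicitly states that the proof is omitted and refers the reader to \cite[Theorem~13]{selberg_thesis} for the analogous $L^2$ argument, noting only that ``minor differences'' are needed. So there is no proof in the paper to compare against; your sketch is in fact supplying what the paper leaves out, and it follows the standard Selberg decomposition that the paper cites.

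Your treatment of the homogeneous piece (Step~1) and the high-modulation piece (Step~2) is correct and matches the usual argument. Step~3 is, as you yourself flag, the genuine work, and your outline is on the right track but remains a plan rather than a proof. Two points that would need tightening: first, the explicit kernel you wrote has removable singularities at $\rho=\pm|\xi|$, and in practice one does not try to exploit the cancellation directly but instead expands $e^{it\rho}$ about $e^{\pm it|\xi|}$ (or uses the identity $\int_0^t e^{it'\rho}D^{-1}\sin((t-t')D)\,dt'$ written as a convergent power series in $t$ after the modulation split), which separates the three terms into pieces that are individually bounded; second, the mechanism producing the $T^{\epsilon/2}$ factor in this step is somewhat more delicate than the H\"older-in-$\rho$ argument you describe---in Selberg's proof it comes from trading the $b-1$ modulation weight one naturally obtains for $u_1$ against the $b+\epsilon-1$ weight on $F$, using the compact modulation support $\langle|\tau|-|\xi|\rangle\lesssim T^{-1/2}$ of $F_1$ in the \emph{opposite} direction from what one might first guess (one gains $T^{\epsilon/2}$ because the cutoff $\chi(t/T)$ already contributes a positive power of $T$ at the cost of the modulation index). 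Your final paragraph gestures at the right localization, but the bookkeeping of exactly where the power of $T$ appears should be made explicit.
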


Using this theorem, one can tackle the local well-posedness of a nonlinear Cauchy problem via an iteration argument.

Consider the Cauchy problem
\begin{align}
 & \Box u=F(u, \del u), \quad (t, x)\in \R^{1+n}, \label{NLW_eq}\\
 & (u, \del_t u)|_{t=0}=(f, g)\in \^{H}_s^r\times \^{H}_{s-1}^r \label{NLW_ic},
\end{align}
where $\del u$ is the space-time gradient of $u$, and $F$ is a smooth function with $F(0)=0$.

\begin{theorem}[c.f Theorem 14 in \cite{selberg_thesis}]\label{gen_LWP}
 Assume $s\in\R$, $\frac{1}{r}<b<1$, $\epsilon\in (0, 1-b)$. If
\begin{equation*}\label{F_b}
 \|F(u, \del u)\|_{X_{\sigma-1, b+\epsilon-1}^r}\leq A_\sigma (\|u\|_{Z_{s, b}^r})\|u\|_{Z_{\sigma, b}^r}
\end{equation*}
for all $\sigma\geq s$, and
\begin{equation*}\label{F_c}
 \|F(u, \del u)-F(v, \del v)\|_{X_{\sigma-1, b+\epsilon-1}^r}\leq A_s (\|u\|_{Z_{s, b}^r}+\|v\|_{Z_{s, b}^r})\|u-v\|_{Z_{\sigma, b}^r},
\end{equation*}
for all $u, v\in Z_{s, b}^r$, where $A_\sigma:\R_+\to\R_+$ is increasing and locally Lipschitz for every $\sigma\geq s$, then
\begin{itemize}
 \item (existence) There exists $u\in Z_{s, b}^r$, which solves \eqref{NLW_eq}-\eqref{NLW_ic} on $[0, T]\times \R^n$, where $T=T(\|f\|_{\^{H}_s^r}+\|g\|_{\^{H}_s^r})>0$ depends continuously on $\|f\|_{\^{H}_s^r}+\|g\|_{\^{H}_s^r}$.
 \item (uniqueness) The solution is uniques, in the class $Z_{s, b}^r$, i.e., if $u, v\in Z_{s, b}^r$ both solve \eqref{NLW_eq}-\eqref{NLW_ic} on $[0, T]\times \R^n$ for some $T>0$, then
\[
 u(t)=v(t) \qquad \text{for } t\in [0, T].
\]
 \item (Lipschitz) The solution map
\[
 (f, g)\mapsto u, \qquad \^{H}_s^r\times \^{H}_{s-1}^r\to Z_{s, b}^r
\]
is locally Lipschitz.
 \item (higher regularity) If the data has higher regularity
\[
 f\in \^{H}_\sigma^r, g\in \^{H}_{\sigma-1}^r \qquad \text{for } \sigma >s,
\]
then $u\in C([0, T], \^{H}_{\sigma}^r)\cap C^1([0, T], \^{H}_{\sigma-1}^r)$ for any $T>0$ for which $u$ solves \eqref{NLW_eq}-\eqref{NLW_ic}. In particular, if $(f, g)\in\sS$, then $u\in C^\infty ([0, T]\times \R^n)$.
\end{itemize}
\end{theorem}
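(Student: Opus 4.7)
The plan is to obtain the solution by a Picard iteration / contraction mapping argument in the time-restricted space $Z_{s,b;T}^r$, using Theorem \ref{LW} to invert the linear wave operator and the hypotheses on $F$ to control the nonlinearity. Concretely, I would define a map $\Phi$ on a closed ball in $Z_{s,b;T}^r$ by setting $\Phi(u)$ equal to the expression for $u$ given in Theorem \ref{LW} with $F$ there replaced by $F(u,\partial u)$, where the cutoffs $\chi(t)$ and $\chi(t/T)$ are in place and the spectral cutoff $\phi(T^{1/2}\Lambda_-)$ is used to split $F(u,\partial u)=F_1+F_2$. Theorem \ref{LW} yields
\[
\|\Phi(u)\|_{Z_{s,b}^r}\;\le\; C\bigl(\|f\|_{\^H_s^r}+\|g\|_{\^H_{s-1}^r}\bigr)+CT^{\epsilon/2}\|F(u,\partial u)\|_{X_{s-1,b+\epsilon-1}^r},
\]
and combined with the hypothesis $\|F(u,\partial u)\|_{X_{s-1,b+\epsilon-1}^r}\le A_s(\|u\|_{Z_{s,b}^r})\|u\|_{Z_{s,b}^r}$ this shows $\Phi$ preserves the ball $\{u:\|u\|_{Z_{s,b;T}^r}\le R\}$ once $R$ is taken $\sim C(\|f\|_{\^H_s^r}+\|g\|_{\^H_{s-1}^r})$ and $T$ is chosen small enough that $CT^{\epsilon/2}A_s(R)\le \tfrac12$.

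For the contraction property, I would apply Theorem \ref{LW} to the difference $\Phi(u)-\Phi(v)$, which satisfies the inhomogeneous linear wave equation with zero data and right-hand side $F(u,\partial u)-F(v,\partial v)$. The Lipschitz estimate in the hypothesis yields
\[
\|\Phi(u)-\Phi(v)\|_{Z_{s,b;T}^r}\;\le\; CT^{\epsilon/2}A_s(\|u\|_{Z_{s,b}^r}+\|v\|_{Z_{s,b}^r})\|u-v\|_{Z_{s,b;T}^r},
\]
so shrinking $T$ further makes the constant $<1$ and the Banach fixed point theorem produces a unique fixed point $u\in Z_{s,b;T}^r$, which is the desired solution on $[0,T]\times\R^n$ with $T=T(\|f\|_{\^H_s^r}+\|g\|_{\^H_{s-1}^r})$ depending continuously (and even monotonically) on the norm of the data through the locally Lipschitz function $A_s$. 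Uniqueness in the larger class $Z_{s,b}^r$ follows by a standard continuity-in-$T$ argument: if $u,v$ both solve the problem, the above Lipschitz bound on a short interval forces $u=v$ there, and a bootstrap extends the equality to $[0,T]$.

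For the Lipschitz dependence, given two data pairs $(f_j,g_j)$ with solutions $u_j$, I would subtract the Duhamel representations and apply Theorem \ref{LW} once more to the resulting inhomogeneous linear equation; combined with the Lipschitz hypothesis on $F$ and absorption of the small $CT^{\epsilon/2}A_s(\cdots)$ factor from the nonlinear term to the left side, this gives $\|u_1-u_2\|_{Z_{s,b;T}^r}\lesssim \|f_1-f_2\|_{\^H_s^r}+\|g_1-g_2\|_{\^H_{s-1}^r}$ on the common interval of existence. For persistence of higher regularity $(f,g)\in \^H_\sigma^r\times\^H_{\sigma-1}^r$ with $\sigma>s$, I would rerun the contraction at level $\sigma$: the hypothesis $\|F(u,\partial u)\|_{X_{\sigma-1,b+\epsilon-1}^r}\le A_\sigma(\|u\|_{Z_{s,b}^r})\|u\|_{Z_{\sigma,b}^r}$ controls the higher norm linearly in $\|u\|_{Z_{\sigma,b}^r}$ with coefficient depending only on the already-controlled lower norm, so a standard Gronwall/iteration argument shows $u\in Z_{\sigma,b;T}^r$ on the same interval, and Theorem \ref{LW}(a) embeds this into $C([0,T],\^H_\sigma^r)\cap C^1([0,T],\^H_{\sigma-1}^r)$; iterating for all $\sigma$ gives the $C^\infty$ statement for Schwartz data.

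The main technical obstacle is keeping the cutoff apparatus of Theorem \ref{LW} consistent under iteration: one needs to verify that the truncations $\chi(t/T)$ and $\phi(T^{1/2}\Lambda_-)$ interact well with the time-restriction norm so that evaluating $\Phi(u)$ only through $F(u,\partial u)$ restricted to $[-T,T]$ is well-defined and produces the same fixed point, and that the $T^{\epsilon/2}$ smallness genuinely survives the splitting $F=F_1+F_2$ used to treat the low- and high-modulation parts separately. Once this bookkeeping matches the $L^2$ case in \cite{selberg_thesis}, which it does because the only ingredients used are the transfer principle (Proposition \ref{transfer_prop}), H\"older's inequality on the Fourier side, and the embedding $X_{s,b}^r\hookrightarrow C(\R,\^H_s^r)$ for $b>1/r$ from Proposition \ref{int_rep_prop}, the argument carries over verbatim with $L^2$ replaced by $\^L^r$ throughout.
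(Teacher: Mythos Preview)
The paper does not actually prove this theorem: it explicitly states ``We do not include the proof here, as, with minor differences, it is similar to the proof of the analogous theorem in the $L^2$ case'' and refers to \cite[Theorem 4.1]{selberg_thesis}. Your sketch is precisely the standard contraction-in-$Z_{s,b;T}^r$ argument built on Theorem~\ref{LW} that one finds in Selberg's thesis, so your approach is the intended one and is correct in outline.
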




\end{document}